\titleformat*{\section}{\Large\bfseries}
\titleformat*{\subsection}{\large\bfseries}
\titleformat*{\subsubsection}{\bfseries}
\titleformat*{\paragraph}{\bfseries}
\titleformat*{\subparagraph}{\bfseries}
\crefname{lem}{Lemma}{Lemmas}
\crefname{thm}{Theorem}{Theorems}
\crefname{cor}{Corollary}{Corollaries}
\crefname{prop}{Proposition}{Propositions}
\crefname{conj}{Conjecture}{Conjectures}
\crefname{obs}{Observation}{Observations}
\setlist[itemize]{topsep=0ex,itemsep=0ex,parsep=0.25ex}
\setlist[enumerate]{topsep=0ex,itemsep=0ex,parsep=0.25ex}
\newcommand{\defn}[1]{\textcolor{Maroon}{\emph{#1}}}
\def\NAT@spacechar{~}
\DeclarePairedDelimiter{\abs}{\lvert}{\rvert}
\DeclarePairedDelimiter{\ceil}{\lceil}{\rceil}
\DeclarePairedDelimiter{\set}{\{}{\}}
\renewcommand{\geq}{\geqslant}
\renewcommand{\leq}{\leqslant}
\renewcommand{\emptyset}{\varnothing}
\renewcommand{\epsilon}{\varepsilon}
\renewcommand{\subset}{\subseteq}
\DeclareMathOperator{\dist}{dist}
\DeclareMathOperator{\had}{had}
\DeclareMathOperator{\col}{col}
\DeclareMathOperator{\domhad}{domhad}
\newcommand{\PP}{\mathcal{P}}
\newcommand{\bE}{\mathbb{E}}
\newcommand{\bP}{\mathbb{P}}
\newcommand{\OO}{\mathcal{O}}
\renewcommand{\thefootnote}{\fnsymbol{footnote}}
\theoremstyle{plain}
\newtheorem{thm}{Theorem}
\newtheorem{lem}[thm]{Lemma}
\newtheorem{cor}[thm]{Corollary}
\newtheorem{prop}[thm]{Proposition}
\newtheorem{obs}[thm]{Observation}
\theoremstyle{definition}
\begin{document}

\author{Freddie Illingworth\,\footnotemark[1] 
 \qquad David~R.~Wood\,\footnotemark[2]}

\footnotetext[1]{Department of Mathematics, University College London, United Kingdom (\texttt{f.illingworth@ucl.ac\allowbreak.uk}). Research supported by EPSRC grant EP/V521917/1 and the Heilbronn Institute for Mathematical Research.}

\footnotetext[2]{School of Mathematics, Monash University, Melbourne, Australia  (\texttt{david.wood\allowbreak@monash.edu}). Research supported by the Australian Research Council.}
%%%%%%%

\sloppy
	
\title{\bf\boldmath Dominating $K_t$-Models}
	
\maketitle

\begin{abstract}
A \defn{dominating $K_t$-model} in a graph $G$ is a sequence $(T_1,\dots,T_t)$ of pairwise disjoint non-empty connected subgraphs of $G$, such that for $1 \leq i<j\leq t$ every vertex in $T_j$ has a neighbour in $T_i$. Replacing ``every vertex in $T_j$'' by ``some vertex in $T_j$'' retrieves the standard definition of $K_t$-model, which is equivalent to $K_t$ being a minor of $G$. We explore in what sense dominating $K_t$-models behave like (non-dominating) $K_t$-models. The two notions are equivalent for $t\leq 3$, but are already very different for $t=4$, since the 1-subdivision of any graph has no dominating $K_4$-model. Nevertheless, we show that every graph with no dominating $K_4$-model is 2-degenerate and 3-colourable. More generally, we prove that 
every graph with no dominating $K_t$-model is $2^{t-2}$-colourable. 
%%%
Motivated by the connection to chromatic number, we study the maximum average degree of graphs with no dominating $K_t$-model. We give an upper bound of $2^{t-2}$, and 
show that random graphs provide a lower bound of $(1-o(1))t\log t$, which we conjecture is asymptotically tight. This result is in contrast to the $K_t$-minor-free setting, where the maximum average degree is 
$\Theta(t\sqrt{\log t})$. 
%%%
The natural strengthening of Hadwiger's Conjecture arises: is every graph  with no dominating $K_t$-model $(t-1)$-colourable? We provide two pieces of evidence for this: (1) It is true for almost every graph, (2) Every graph $G$ with no dominating $K_t$-model has a $(t-1)$-colourable induced subgraph on at least half the vertices, which implies there is an independent set of size at least $\frac{\abs{V(G)}}{2t-2}$.
\end{abstract}

\newpage

\renewcommand{\thefootnote}{\arabic{footnote}}

\section{Introduction}

A \defn{dominating $K_t$-model} in a graph $G$ is a sequence $(T_1, \dots, T_t)$ of pairwise disjoint non-empty connected subgraphs of $G$, such that for $1 \leq i < j \leq t$ every vertex in $T_j$ has a neighbour in $T_i$. Each $T_i$ dominates $T_{i+1}\cup\dots\cup T_t$, hence the name\footnote{For a graph $G$, a set $A\subseteq V(G)$ is \defn{dominating} in $G$ if every vertex in $V(G)\setminus A$ has a neighbour in $A$. A set $A\subseteq V(G)$ is \defn{connected} if $G[A]$ is connected.}. 
Contracting each $T_i$ into a single vertex and deleting vertices not in $T_1\cup\dots\cup T_t$ gives $K_t$ as a minor. Indeed, in the definition of dominating $K_t$-model, replacing ``every vertex in $T_j$'' by ``some vertex in $T_j$'' retrieves the standard definition of $K_t$-model, which is equivalent to $K_t$ being a minor of $G$. 

At first glance, the definition of dominating $K_t$-model might seem very restrictive. This paper explores in what sense dominating $K_t$-models behave like (non-dominating) $K_t$-models. We show that several proof methods in the literature regarding $K_t$-models in fact work with dominating $K_t$-models. On the other hand, we show some significant differences. 

For $t\leq 3$ it is easily seen that a graph $G$ has a $K_t$-minor if and only if $G$ has a dominating $K_t$-model (\cref{t123}). However, for $t=4$, the behaviour changes dramatically: there are graphs that contain arbitrarily large complete graph minors, but contain no dominating $K_4$-model (\cref{Subdiv}). Nevertheless, we show that every graph with minimum degree at least 3 contains a dominating $K_4$-model (\cref{MinDegree3}), thus strengthening a classical result of \citet{Hadwiger43} and \citet{Dirac52}.

Now consider the chromatic number $\chi(G)$. \Citet{Hadwiger43} famously conjectured that every $K_t$-minor-free graph is $(t-1)$-colourable. This is widely considered to be one of the most important open problems in combinatorics (see \citep{SeymourHC} for a survey). The best upper bound on the chromatic number is $\OO(t\log\log t)$ due to \citet{DP21}. It is open whether $K_t$-minor-free graphs are $\OO(t)$-colourable.

The following natural question arises: what is the maximum chromatic number of a graph with no dominating $K_t$-model? 
We prove that this maximum exists, and in particular, the answer is at most $3\cdot 2^{t-4}$ for $t\geq 4$ (see \cref{ColouringImproved}). Complete graphs provide a lower bound of $t-1$. It is possible that every graph with no dominating $K_t$-model is $(t-1)$-colourable, which would be a considerable strengthening of Hadwiger's Conjecture. We prove this is true for $t\leq 4$ (see \cref{NoDomK4Colour}).

We show that two pieces of evidence for Hadwiger's Conjecture also hold for its dominating version. First, consider independent sets. Let \defn{$\alpha(G)$} be the maximum size of an independent set in a graph $G$. 
Hadwiger's Conjecture would imply that 
$\alpha(G)\geq \frac{n}{t-1}$ for every $n$-vertex $K_t$-minor-free graph $G$. 
\citet{DM82} proved that $\alpha(G)\geq \frac{n}{2t-2}$ for such $G$. We extend this result for graphs with no dominating $K_t$-model (\cref{BigIndSet}). 

The second piece of evidence concerns random graphs. 
The \defn{Hadwiger number} of a graph $G$, denoted by \defn{$\had(G)$}, is the maximum integer $t$ such that $K_t$ is a minor of $G$. Hadwiger's Conjecture asserts that $\chi(G)\leq\had(G)$ for every graph $G$.  Similarly, define the \defn{dominating Hadwiger number} of a graph $G$, denoted by \defn{$\domhad(G)$}, to be the maximum integer $t$ such that $G$ contains a dominating $K_t$-model.

\Citet{BCE80} showed that almost every graph $G$ satisfies $\chi(G)\leq\had(G)$. This means that the probability that a random $n$-vertex graph satisfies $\chi(G)\leq\had(G)$ tends to 1 as $n\to\infty$. Strengthening the result of \citet{BCE80}, we show that almost every graph $G$ satisfies $\chi(G)\leq\domhad(G)$ (\cref{DomHadConAlmost}). The proof of this result shows that the Hadwiger number and the dominating Hadwiger number behave differently for the Erd\H{o}s-Renyi\footnote{The \defn{Erd\H{o}s-Renyi random graph $G(n, p)$} is the $n$-vertex graph in which each edge is present with probability $p$ independently of all other edges.} random graph $G(n, p)$. In particular, 
$\domhad(G(n, \tfrac{1}{2})) = (1 + o(1))\frac{n}{\log_2 n}$ while $\had(G(n, \tfrac{1}{2})) = (1 + o(1))\frac{n}{\sqrt{\log_2 n}}$. 

Random graphs and the chromatic number both relate to the natural extremal questions: What is the maximum average degree of a $K_t$-minor-free graph?
\Citet{Kostochka82,Kostochka84} and \citet{Thomason84} independently answered this question, by showing that the maximum average degree of a $K_t$-minor-free graph is in $\Theta(t\sqrt{\log t})$, where random graphs provide the extremal examples. Later, \citet{Thomason01} determined the leading constant asymptotically. Consider the analogous question for dominating models:  What is the maximum average degree of a graph with no dominating $K_t$-model? We show that the answer is at most $2^{t-2}$ (\cref{AverageDegree}) and that random graphs provide a lower bound of $(1-o(1))t\log t$. Closing this gap is an interesting open problem. We establish a number of results that suggest a $\OO(t\log t)$ upper bound. 

%%%%%%%%%%%%%%%%%%%%%%%%%%%%%%%%%%
\section{Basics}

First note that $K_t$-models and dominating $K_t$-models are equivalent for $t\leq 3$.

\begin{obs}\label{t123}
    For $t \in \set{1,2,3}$, a graph $G$ has a dominating $K_t$-model if and only if $G$ has a $K_t$-model.
\end{obs}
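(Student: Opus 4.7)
The plan is to verify each value $t\in\set{1,2,3}$ separately. For every $t$, the ``dominating $K_t$-model $\Rightarrow$ $K_t$-model'' direction is immediate from the definitions, since the domination condition strengthens the mere existence of an edge between $T_i$ and $T_j$. So all the content lies in the converse.

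The $t=1$ and $t=2$ cases are essentially bookkeeping. For $t=1$, a $K_1$-model is just a single non-empty connected subgraph and the domination condition is vacuous, so the two notions coincide with ``$G$ is non-empty''. For $t=2$, a $K_2$-model exists iff $G$ has an edge $uv$, and then $(\set{u},\set{v})$ is already a dominating $K_2$-model.

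For $t=3$, the plan is to use the classical fact that $G$ has a $K_3$-minor if and only if $G$ contains a cycle. Given such a cycle $C=u_1u_2\dots u_k$ with $k\geq 3$, I would set
\[
T_1 := V(C)\setminus\set{u_2,u_3}, \qquad T_2:=\set{u_2}, \qquad T_3:=\set{u_3}.
\]
Then $T_1$ is connected, since it is the subpath $u_4u_5\dots u_ku_1$ of $C$ (degenerating to the single vertex $u_1$ when $k=3$). To verify the domination conditions: the vertex $u_2$ has neighbour $u_1\in T_1$; the vertex $u_3$ has neighbour $u_2\in T_2$, and also has a neighbour in $T_1$, namely $u_4$ if $k\geq 4$ or $u_1$ if $k=3$. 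This gives a dominating $K_3$-model.

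I do not foresee a real obstacle. The only mildly subtle point is recognising that the $t=3$ case reduces to ``$G$ contains a cycle'', but this is a standard characterisation of $K_3$-minors, and the construction above does not require the cycle to be shortest or induced.
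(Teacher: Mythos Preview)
Your proof is correct and follows essentially the same approach as the paper: both reduce $t=1,2$ to trivial bookkeeping and handle $t=3$ by taking a cycle $C$, picking an edge on it, and setting the two endpoints as singleton branch sets while the rest of the cycle forms $T_1$. The paper's write-up is terser (it just says $(C-v-w,\{v\},\{w\})$ for an edge $vw$ of $C$), but the construction and verification are identical to yours.
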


\begin{proof}
    The $t=1$ case holds since a graph has a dominating $K_1$-model if and only if $V(G)\neq\emptyset$ if and only if $G$ has a $K_1$-model. 
    The $t=2$ case holds since a graph has a dominating $K_2$-model if and only if $E(G)\neq\emptyset$ if and only if $G$ has a $K_2$-model. 
    The $t=3$ case holds since a graph has dominating $K_3$-model if and only if $G$ has a cycle if and only if $G$ has a $K_3$-model. In particular, if $vw$ is an edge of a cycle $C$, then $(C-v-w, \set{v}, \set{w})$ is a dominating $K_3$-model.
\end{proof}

Note the following elementary observations. 

\begin{obs}\label{obs:domtree}
    For $t \geq 2$, a graph $G$ has a dominating $K_t$-model if and only if $G$ has a non-empty connected subgraph $T$ such that $N_G(T)$ has a dominating $K_{t - 1}$-model.
\end{obs}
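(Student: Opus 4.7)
The plan is to give a direct two-direction argument, using $T = T_1$ for the forward direction and $T$ together with the given $K_{t-1}$-model for the reverse.

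For the forward direction, I would take a dominating $K_t$-model $(T_1, \dots, T_t)$ of $G$ and set $T := T_1$, which is non-empty and connected. Since the $T_i$ are pairwise disjoint and every vertex of $T_j$ (for $j \ge 2$) has a neighbour in $T_1$, every vertex of $T_2 \cup \dots \cup T_t$ lies in $N_G(T)$. Hence each $T_j$ with $j \ge 2$ is a non-empty connected subgraph of $G[N_G(T)]$, they remain pairwise disjoint, and the domination conditions between pairs $(T_i, T_j)$ with $2 \le i < j \le t$ are inherited from the original model. Thus $(T_2, \dots, T_t)$ is a dominating $K_{t-1}$-model in $G[N_G(T)]$.

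For the reverse direction, suppose $T$ is a non-empty connected subgraph of $G$ and $(T_2, \dots, T_t)$ is a dominating $K_{t-1}$-model in $G[N_G(T)]$. I claim $(T, T_2, \dots, T_t)$ is a dominating $K_t$-model in $G$. Pairwise disjointness within $\{T_2, \dots, T_t\}$ is immediate, and $T$ is disjoint from each $T_j$ because $V(T_j) \subseteq N_G(T)$, which is disjoint from $V(T)$ by definition. Each $T_j$ is non-empty, connected in $G[N_G(T)]$, and hence connected in $G$. For $2 \le i < j \le t$ the domination condition is inherited, and for $i = 1$ every vertex of $T_j$ lies in $N_G(T)$ and therefore has a neighbour in $T$, giving the required domination by $T_1 := T$.

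No step looks to be a real obstacle; the only thing to watch is the interpretation of ``$N_G(T)$ has a dominating $K_{t-1}$-model'' as ``$G[N_G(T)]$ has a dominating $K_{t-1}$-model,'' and the clean fact that $N_G(T) \cap V(T) = \emptyset$, which is what makes the two constructions inverse to each other.
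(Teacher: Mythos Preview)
Your argument is correct. The paper states this as an elementary observation and gives no proof at all, so your write-up simply fills in the routine details; the two directions you describe (taking $T=T_1$ forward, prepending $T$ backward) are exactly the intended ones, and your care about $N_G(T)\cap V(T)=\emptyset$ and about connectedness in $G[N_G(T)]$ versus $G$ covers the only points worth mentioning.
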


\begin{obs}
    For $t\geq 2$, if a graph $G$ has a dominating $K_t$-model, then $G$ has a $K_t$-model $(T_1,\dots,T_t)$ where each of $T_{t-1}$ and $T_t$ have exactly one vertex. 
\end{obs}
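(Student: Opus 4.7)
The plan is to start with any dominating $K_t$-model $(T_1, \dots, T_t)$ in $G$ and prune $T_t$ and $T_{t-1}$ down to single vertices, taking care to preserve the required adjacencies.

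First I would collapse $T_t$ to a single vertex. Pick any $v \in T_t$. By the dominating condition, $v$ already has a neighbour in every $T_i$ with $i < t$. Hence $(T_1, \dots, T_{t-1}, \set{v})$ is still a dominating $K_t$-model: the pairs $(T_i, T_j)$ with $j \le t-1$ are unchanged, and for $j = t$ the unique vertex $v$ of $\set{v}$ has a neighbour in each $T_i$.

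Next I would collapse $T_{t-1}$. The subtle point is that the new $T_{t-1}$ must still be dominated by the new $T_t = \set{v}$, so it is not enough to pick an arbitrary vertex of $T_{t-1}$. The idea is to pick the single vertex for $T_{t-1}$ \emph{after} fixing $v$: let $w$ be any neighbour of $v$ in $T_{t-1}$, which exists because in the original dominating model $v \in T_t$ has a neighbour in $T_{t-1}$. Then in $(T_1, \dots, T_{t-2}, \set{w}, \set{v})$ the vertex $w$ inherits its adjacencies to every $T_i$ with $i \le t-2$ from its membership in the original $T_{t-1}$, and $v$ is adjacent to $w$ by construction and still dominates the lower levels. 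Verifying the dominating $K_t$-model conditions is then routine.

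I do not anticipate a real obstacle: the only thing one must notice is the order in which the two sets are collapsed (first $T_t$, then $T_{t-1}$), since it is this order that lets us choose $w \in T_{t-1}$ to be adjacent to the already-chosen $v$. The same argument, incidentally, shows that $T_t$ alone can always be taken to be a single vertex — the second half just iterates this observation once more using the dominating property.
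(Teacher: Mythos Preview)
Your argument is correct and is essentially the paper's one-line proof made explicit: the paper appeals to the recursive characterisation (a graph has a dominating $K_t$-model iff some connected subgraph has a dominating $K_{t-1}$-model in its neighbourhood) together with the fact that a dominating $K_2$-model is just an edge, and unravelling this gives exactly your choice of $v\in T_t$ followed by $w\in T_{t-1}\cap N(v)$. One cosmetic slip: you write that the new $T_{t-1}$ ``must be dominated by $T_t=\{v\}$'', but in the definition the direction is the other way (it is $T_t$ that is dominated by $T_{t-1}$); your actual construction is unaffected.
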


\begin{proof}
    This follows from \cref{obs:domtree} and the fact that a graph has a dominating $K_2$-model if and only if it contains an edge.
\end{proof}

\begin{lem}\label{DegreePath}
    If a graph $G$ has a dominating $K_t$-model, then $G$ has a path $v_1,\dots,v_{t}$ where $\deg_G(v_t)\geq t-1$, and $\deg_G(v_i)\geq i$ for $1\leq i\leq t-1$. 
\end{lem}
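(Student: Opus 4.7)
The plan is to construct the path \emph{backwards}, starting from the last set of the dominating model. Let $(T_1, \dots, T_t)$ be a dominating $K_t$-model in $G$. I would first pick any vertex $v_t \in T_t$, and then for $i = t-1, t-2, \dots, 1$ in turn, pick $v_i \in T_i$ to be a neighbour of $v_{i+1}$. Such a $v_i$ is guaranteed to exist, because $v_{i+1}$ lies in $T_{i+1}$ and the dominating property (with $i < i+1$) says that every vertex of $T_{i+1}$ has a neighbour in $T_i$.

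Next I would verify that $v_1, v_2, \dots, v_t$ really is a path in $G$. The vertices are pairwise distinct because the sets $T_1, \dots, T_t$ are pairwise disjoint, and consecutive vertices are adjacent by construction.

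Finally I would verify the degree bounds, which is where the dominating (rather than just minor) condition is crucial. For each $i$ and each $j < i$, the vertex $v_i \in T_i$ has at least one neighbour in $T_j$; since $T_1, \dots, T_{i-1}$ are pairwise disjoint, these contribute at least $i - 1$ distinct neighbours of $v_i$ in $G$. This already yields $\deg_G(v_t) \ge t - 1$. For $1 \le i \le t-1$, the vertex $v_{i+1}$ lies in $T_{i+1}$ and is therefore not among those $i-1$ neighbours in $T_1 \cup \dots \cup T_{i-1}$; since $v_i$ is also adjacent to $v_{i+1}$ by construction, this gives $\deg_G(v_i) \ge (i-1) + 1 = i$.

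I do not anticipate a serious obstacle: the argument is essentially a greedy backward construction, and the only thing to be careful about is to count the $i-1$ ``dominating'' neighbours and the one ``path'' neighbour as distinct, which is immediate from disjointness of the $T_i$.
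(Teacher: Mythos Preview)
Your proposal is correct and follows essentially the same approach as the paper: pick $v_t\in T_t$, then for $i=t-1,\dots,1$ choose $v_i\in T_i$ adjacent to $v_{i+1}$, and bound $\deg_G(v_i)$ by counting one neighbour in each of $T_1,\dots,T_{i-1}$ together with $v_{i+1}\in T_{i+1}$. Your write-up is slightly more explicit about disjointness ensuring distinctness of neighbours, but the argument is identical.
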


\begin{proof}
    Say $(T_1,\dots,T_t)$ is a dominating $K_t$-model in $G$. Let $v_t$ be any vertex in $T_t$. Then $v_t$ has a neighbour in each of $T_1,\dots,T_{t-1}$, so $\deg_G(v)\geq t-1$, as claimed. For $i=t-1,t-2,\dots,1$, let $v_i$ be a vertex in $T_i$ adjacent to $v_{i+1}$.
    Each $v_i$ has a neighbour in each of $T_1,\dots,T_{i-1},T_{i+1}$, so $\deg_G(v_i)\geq i$. By construction, $v_1,\dots,v_{t}$ is a path. 
\end{proof}

\Cref{DegreePath} implies the following.

\begin{cor}\label{MaxDegree}
    Every graph with maximum degree $\Delta$ contains no dominating $K_{\Delta+2}$-model. 
\end{cor}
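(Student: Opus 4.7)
The plan is to apply \cref{DegreePath} directly with $t = \Delta + 2$. Suppose for contradiction that $G$ is a graph of maximum degree $\Delta$ that does contain a dominating $K_{\Delta+2}$-model. Then \cref{DegreePath} yields a path $v_1, \ldots, v_{\Delta + 2}$ in $G$ with $\deg_G(v_{\Delta+2}) \ge (\Delta + 2) - 1 = \Delta + 1$. This directly contradicts the hypothesis that every vertex of $G$ has degree at most $\Delta$.

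There is no real obstacle here: the corollary is essentially just a numerical consequence of \cref{DegreePath}, and the one-line proof above suffices. The only thing worth emphasising in the write-up is that the bound $\Delta + 2$ (rather than $\Delta + 1$) is the right one, because \cref{DegreePath} gives degree at least $t - 1$ at $v_t$, so we need $t - 1 > \Delta$, i.e.\ $t \geq \Delta + 2$. Indeed $K_{\Delta+1}$ has maximum degree $\Delta$ and trivially contains a dominating $K_{\Delta+1}$-model (each part a single vertex), showing that the constant $\Delta + 2$ cannot be improved.
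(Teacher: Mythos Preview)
Your proof is correct and matches the paper's approach exactly: the paper simply states that \cref{MaxDegree} follows from \cref{DegreePath}, and your argument spells out precisely this implication. Your remark on tightness via $K_{\Delta+1}$ is a nice addition not present in the paper.
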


\Cref{MaxDegree} highlights a big difference between $K_t$-models and dominating $K_t$-models, since there are $n$-vertex graphs with maximum degree 3 that contain $K_{c\sqrt{n}}$-models, but contain no dominating $K_5$-model by \cref{MaxDegree}. This difference is also highlighted by the following result, which is an immediate consequence of
\cref{DegreePath}. 

\begin{cor}\label{Subdiv}
    For $n\geq 4$, if $G$ is any graph obtained from $K_n$ by subdividing each edge at least once, then $G$ contains no dominating $K_4$-model.
\end{cor}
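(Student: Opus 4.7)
The plan is to apply \cref{DegreePath} with $t = 4$ and derive a contradiction from the structure of an edge-subdivision of $K_n$.

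First I would observe that in any graph $G$ obtained from $K_n$ by subdividing each edge at least once, every vertex is either an \emph{original} vertex of $K_n$, of degree $n - 1 \ge 3$, or a \emph{subdivision} vertex lying in the interior of some subdivided edge, of degree exactly $2$. In particular, the only vertices of degree at least $3$ are the original vertices, and by construction of the subdivision no two original vertices are adjacent in $G$ (every original edge has at least one internal subdivision vertex).

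Next, suppose for contradiction that $G$ has a dominating $K_4$-model. Then \cref{DegreePath} (with $t = 4$) yields a path $v_1, v_2, v_3, v_4$ in $G$ with $\deg_G(v_3) \ge 3$ and $\deg_G(v_4) \ge 3$. By the observation above, both $v_3$ and $v_4$ must be original vertices of $K_n$. But $v_3 v_4$ is an edge of $G$, contradicting the fact that no two original vertices are adjacent in the subdivision. Hence no dominating $K_4$-model exists.

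The argument is essentially immediate once one sees that the degree profile forced by \cref{DegreePath} (two adjacent vertices of degree $\ge 3$) is incompatible with the bipartite-like structure between original and subdivision vertices; the only subtlety is verifying that subdivision vertices genuinely have degree $2$ (which uses the hypothesis that every edge of $K_n$ is subdivided at least once, so each subdivision vertex has exactly two neighbours along its subdivided edge).
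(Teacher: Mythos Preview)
Your proof is correct and matches the paper's approach exactly: the paper states the corollary as an immediate consequence of \cref{DegreePath}, and you have simply spelled out the one-line argument (two adjacent vertices of degree $\geq 3$ are forced, but no two original vertices are adjacent in the subdivision). The only quibble is your closing parenthetical: subdivision vertices have degree $2$ by definition regardless of the hypothesis; the ``at least once'' assumption is what guarantees no two original vertices remain adjacent, which is the fact you actually use.
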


We conclude this section by mentioning a curious consequence of \cref{MaxDegree}. If $\chi(G)\leq\domhad(G)$ for every graph $G$, then, \cref{MaxDegree} would imply that every graph with maximum degree $\Delta$ is $(\Delta+1)$-colourable, which is true since such graphs are $\Delta$-degenerate.

%%%%%%%%%%%%%%%%%%%%%%%%%%%%%%%%%%%%
\section{Dominating \texorpdfstring{$K_4$}{K4}-Models}

\citet{Hadwiger43} and \citet{Dirac52} proved that every graph with minimum degree at least 3 has a $K_4$-model. Here we strengthen this result. 

\begin{thm}\label{MinDegree3}
    Every graph $G$ with minimum degree at least 3 has a dominating $K_4$-model.
\end{thm}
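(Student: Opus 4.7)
The plan is as follows. By \cref{obs:domtree} (with $t=4$) together with the $t=3$ case of \cref{t123}, it suffices to find a nonempty connected subgraph $T \subseteq V(G)$ such that $G[N_G(T)]$ contains a cycle. Equivalently, I want to find a cycle $C$ in $G$ and a connected subgraph $T \subseteq V(G) \setminus V(C)$ such that every vertex of $V(C)$ has a neighbour in $T$; the desired model is then $(T,\, V(C) \setminus \{v,w\},\, \{v\},\, \{w\})$ for any edge $vw$ of $C$, where the last three pieces form the dominating $K_3$-model in $G[N_G(T)]$ supplied by the proof of \cref{t123}.

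My first attempt is to take $C$ to be a shortest cycle of $G$. Because $C$ is chordless and $G$ has minimum degree at least $3$, every $c \in V(C)$ has at least one neighbour in $R := V(G) \setminus V(C)$, so $R$ is nonempty. If $G[R]$ is connected, the proof is complete with $T := R$: every vertex of $V(C)$ has a neighbour in $T$, hence $V(C) \subseteq N_G(T)$, and the cycle $C$ lies inside $G[N_G(T)]$.

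The remaining case is when $G[R]$ is disconnected. Write $R = K_1 \cup \dots \cup K_m$ with $K_j$ the components, and set $S_j := \{c \in V(C) : c \text{ has a neighbour in } K_j\}$. Since each vertex of $V(C)$ has a neighbour in some $K_j$, the sets $S_j$ cover $V(C)$; if some $S_j = V(C)$, then $T := V(K_j)$ works. Otherwise, my plan is to choose the pair $(C,T)$ extremally---for example, taking $C$ to be a shortest cycle minimising the number of components of $G - V(C)$---and derive a contradiction via a rerouting argument. If a component $K_j$ attaches to $V(C)$ at two distinct vertices $c_i, c_k$, one can replace an appropriate arc of $C$ between $c_i$ and $c_k$ by a path through $K_j \cup \{c_i, c_k\}$, producing a new cycle $C'$ whose complement has strictly fewer components (since at least one vertex of $K_j$ is absorbed into $V(C')$), contradicting the extremal choice.

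The main obstacle, I expect, is controlling the length of $C'$ so as to respect the extremal choice: in particular, when the girth is $3$, the new cycle $C'$ must also be a triangle and the rerouting path must be a single edge, which likely requires a separate local argument combining the minimum-degree hypothesis with the structure around $c_i, c_k$. A secondary complication arises if some $K_j$ attaches to $V(C)$ at only one vertex $c$: then $c$ is a cut-vertex of $G$, and one has to either adapt the rerouting around $c$ or pass to a $2$-connected block (with careful attention to degrees at the cut-vertex) before reapplying the argument.
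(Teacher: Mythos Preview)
Your overall target---an induced cycle $C$ and a connected $T \subseteq V(G)\setminus V(C)$ with every vertex of $C$ having a neighbour in $T$---is exactly the paper's. But the proposal is a plan with explicitly flagged holes, and the extremal quantity you suggest does not close them. If ``shortest cycle'' is the primary criterion, rerouting an arc of $C$ through a component $K_j$ generally lengthens the cycle, so no contradiction arises; the girth-$3$ worry you raise is not special---the same obstruction appears at every girth. If instead ``fewest components of $G-C$'' is primary, the claim that rerouting strictly decreases the component count is unjustified: the deleted arc drops into $G-C'$ and may create new components (for instance when its outside neighbours all lay on the absorbed path through $K_j$), while $K_j$ minus that path may itself fragment. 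The single-attachment case you defer is also not a free reduction: passing to a block can lower the degree at the cut-vertex below $3$.

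The idea you are missing is the paper's choice of extremal pair: take a cycle $C$ and a component $H$ of $G-C$ with $\lvert V(H)\rvert$ \emph{maximum}, and subject to that minimise $\lvert V(C)\rvert$ (so $C$ is induced). Fix any $z\in V(C)$ with a neighbour in $H$. Then $G-(V(H)\cup\{z\})$ must be a forest, since any cycle $C'$ in it would leave a component of $G-C'$ containing $V(H)\cup\{z\}$, strictly larger than $H$. This one observation replaces all the rerouting. It forces any second component $H'$ of $G-C$ to be a tree; each leaf of $H'$ has at least two neighbours in $C$ and hence at least one in $C-z$, so two leaves together with an arc of $C-z$ (or a single-vertex $H'$, which has three neighbours in $C$) produce a cycle in $G-(V(H)\cup\{z\})$, a contradiction. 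Hence $H$ is the unique component of $G-C$, every vertex of the induced cycle $C$ has a neighbour in $H$, and $(H,\,C-x-y,\,\{x\},\,\{y\})$ for any edge $xy$ of $C$ is the dominating $K_4$-model---no rerouting, no length control, no block decomposition.
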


\begin{proof}
    We may assume that $G$ is connected. Since $\delta(G) \geq 3$, $G$ has a cycle. Choose a pair $(C, H)$ where $C$ is a cycle of $G$ and $H$ is a component of $G - C$ such that $\abs{V(H)}$ is maximum (over all choices of $(C, H)$) and, subject to this, $\abs{V(C)}$ is minimum.
    By minimality, $C$ is an induced cycle of $G$.
    Since $\delta(G) \geq 3$, $H$ is non-empty. Since $G$ is connected, there is a vertex $z$ of $C$ with at least one neighbour in $H$.

    Suppose that $G - (V(H) \cup \set{z})$ contains a cycle $C'$. Then $G - C'$ has a component containing $\set{z} \cup V(H)$ which contradicts the choice of $(C, H)$. Thus
    \begin{equation}\label{eq:forest}
        G - (V(H) \cup \set{z}) \text{ is a forest.} \tag{$\dagger$}
    \end{equation}
    Suppose that $G - C$ has a component $H' \neq H$. If $H'$ is a single vertex $a$, then $a$ must have at least three neighbours in $C$, and so has neighbours $u, v \in C - z$. Let $P_{uv}$ be the path in $C$ from $u$ to $v$ avoiding $z$. Then $va$, $au$, and $P_{uv}$ form a cycle contradicting \eqref{eq:forest}. Thus $H'$ has at least two vertices. By \eqref{eq:forest}, $H'$ induces a tree and so has at least two leaves. Each leaf must have at least two neighbours in $C$ and so has a neighbour in $C - z$. Let $a_1$ and $a_2$ be leaves of $H'$, and $v_1, v_2 \in C - z$ be neighbours of $a_1$ and $a_2$, respectively. Let $P_{a_2 a_1}$ be the path in $H'$ from $a_2$ to $a_1$ and $P_{v_1 v_2}$ be the path in $C$ from $v_1$ to $v_2$ avoiding $z$. Then $P_{a_2 a_1}$, $a_1 v_1$, $P_{v_1 v_2}$, and $v_2 a_2$ form a cycle contradicting \eqref{eq:forest}. Thus $H$ is the only component of $G - C$.

    Since $C$ is an induced cycle of $G$ and $\delta(G) \geq 3$, every vertex of $C$ has a neighbour in $H$. Let $xy$ be an edge of $C$ and $P$ be the path $C - x - y$. Then $(H, P, \set{x}, \set{y})$ is a dominating $K_4$-model in $G$.
\end{proof}

\cref{MinDegree3} implies the following results.

\begin{cor}
\label{NoDomK4}
For $n\geq 2$, every $n$-vertex graph with no dominating $K_4$-model has at most $2n-3$ edges.
\end{cor}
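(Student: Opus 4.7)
The plan is a short induction on $n$ using \cref{MinDegree3} as the key input. The base case is $n=2$: here $G$ has at most $1=2n-3$ edges. For the inductive step, let $G$ be an $n$-vertex graph ($n\geq 3$) with no dominating $K_4$-model. By \cref{MinDegree3}, $G$ has a vertex $v$ with $\deg_G(v)\leq 2$ (otherwise $\delta(G)\geq 3$ and $G$ would have a dominating $K_4$-model).

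The second observation is that the property of having no dominating $K_4$-model is preserved by taking induced subgraphs: any dominating $K_4$-model in $G-v$ is automatically a dominating $K_4$-model in $G$, since the trees $T_i$ and the domination edges all live inside $G-v\subseteq G$. Thus $G-v$ has no dominating $K_4$-model.

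Now I would apply the inductive hypothesis to $G-v$, which has $n-1\geq 2$ vertices, to obtain $\abs{E(G-v)}\leq 2(n-1)-3=2n-5$. Adding back $v$ contributes at most $\deg_G(v)\leq 2$ edges, giving $\abs{E(G)}\leq 2n-3$ as required.

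There is essentially no obstacle here: all the work has been done in \cref{MinDegree3}, and the corollary is a routine degeneracy-type deduction. The only thing worth mentioning explicitly is that the $n=3$ case is already tight (witnessed by $K_3$, which has $3=2n-3$ edges and no $K_4$-minor, hence no dominating $K_4$-model), so no stronger bound can be extracted by this method.
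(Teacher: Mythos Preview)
Your proof is correct and is essentially the same approach as the paper's: the paper states \cref{NoDomK4} as an immediate corollary of \cref{MinDegree3} without further argument, and your induction on $n$ (find a vertex of degree at most $2$, delete it, apply induction) is precisely the standard $2$-degeneracy deduction that fills in this implication.
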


\citet{Hadwiger43} and \citet{Dirac52} proved that every  $K_4$-minor-free graph is 3-colourable. \Cref{MinDegree3} implies the following strengthening. 

\begin{cor}
\label{NoDomK4Colour}
Every graph with no dominating $K_4$-model is 2-degenerate and 3-colourable. 
\end{cor}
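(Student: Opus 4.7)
The plan is to derive both statements directly from \cref{MinDegree3}, together with the observation that the property ``has no dominating $K_4$-model'' is closed under taking subgraphs.

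First I would verify this closure property: if $H$ is a subgraph of $G$ and $(T_1, T_2, T_3, T_4)$ is a dominating $K_4$-model in $H$, then the very same sequence is a dominating $K_4$-model in $G$, since each $T_i$ remains a non-empty connected subgraph of $G$, the $T_i$ remain pairwise disjoint, and every edge used to witness the domination condition in $H$ is also present in $G$. Equivalently, if $G$ has no dominating $K_4$-model then no subgraph of $G$ has one either.

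Next I would apply the contrapositive of \cref{MinDegree3}: every graph with no dominating $K_4$-model contains a vertex of degree at most $2$. Combined with the previous paragraph (applied to every subgraph $H$ of $G$), this shows that every subgraph of $G$ has a vertex of degree at most $2$, which is precisely the definition of $G$ being $2$-degenerate.

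Finally, to obtain $3$-colourability I would invoke the standard greedy argument: repeatedly deleting a vertex of degree at most $2$ produces an ordering $v_1, \dots, v_n$ of $V(G)$ in which each $v_i$ has at most two neighbours among $v_{i+1}, \dots, v_n$; colouring in reverse order, each vertex has at most two already-coloured neighbours and hence can be assigned a colour from $\{1,2,3\}$. There is no real obstacle here: the corollary is essentially a packaging of \cref{MinDegree3} together with the subgraph-monotonicity of the forbidden configuration and a routine greedy colouring.
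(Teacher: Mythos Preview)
Your proposal is correct and matches the paper's approach exactly: the paper simply states that \cref{NoDomK4Colour} follows from \cref{MinDegree3} without spelling out the details, but the intended argument is precisely the subgraph-closure plus contrapositive plus greedy-colouring reasoning you give. There is nothing to add.
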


These two results suggest that dominating $K_4$-models behave like $K_4$-models, although we emphasise that there are graphs with no dominating $K_4$-model that contain arbitrarily large complete graph minors (\cref{Subdiv}).

%%%%%%%%%%%%%%%%%%%%%%%%%%%%%%%%%%%%%%%%%%%%%%%
\section{Random Graphs}

The main result of this section (\cref{thm:domhadGnp}) asymptotically determines the dominating Hadwiger number of $G(n, p)$. We will ignore ceilings and floors in this section. We first need the following technical lemma.

\begin{lem}\label{lem:Gnpconnected}
    For fixed $p \in (0, 1)$,
    \begin{equation*}
        \bP(G(n, p) \text{ is not connected}) = \OO(n(1 - p)^n),
    \end{equation*}
    where the implied constant may depend on $p$ but not on $n$.
\end{lem}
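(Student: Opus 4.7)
The plan is a union-bound argument over potential small components. If $G(n, p)$ is disconnected, it contains a component of size at most $n/2$; writing $q := 1 - p$, this gives
\begin{equation*}
    \bP(G(n, p) \text{ is not connected}) \le \sum_{k = 1}^{n/2} \binom{n}{k} q^{k(n - k)},
\end{equation*}
since the $k$-th summand is the expected number of subsets $S$ of size $k$ with no edges between $S$ and $V(G) \setminus S$. The goal is then to show the right hand side is $O(nq^n)$.

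The $k = 1$ term is $nq^{n-1} = \tfrac{1}{q} \cdot nq^n = O(nq^n)$, already matching the target, so the task reduces to showing that the sum over $k \ge 2$ is $o(nq^n)$. I would handle $k = 2$ directly: $\binom{n}{2} q^{2(n-2)} = O(n^2 q^{2n}) = nq^n \cdot O(nq^n)$, which is $o(nq^n)$ since $nq^n \to 0$ for fixed $p \in (0, 1)$. For $3 \le k \le n/2$, the crude bounds $\binom{n}{k} \le n^k$ and $n - k \ge n/2$ combine to give $\binom{n}{k} q^{k(n-k)} \le (nq^{n/2})^k$. Setting $\beta := nq^{n/2}$, which tends to $0$, the resulting tail is a convergent geometric series whose sum is at most $\beta^3/(1 - \beta) = O(n^3 q^{3n/2})$; dividing by $nq^n$ yields $O(n^2 q^{n/2}) \to 0$, so this range also contributes $o(nq^n)$.

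There is no substantive obstacle, as the bound is driven by the single term $k = 1$ and the rest decays superexponentially thanks to the $q^{k(n - k)}$ factor. The one small care to take is that the crude bound $(nq^{n/2})^k$ is too loose at $k = 2$ (it gives $n^2 q^n$, a factor of $n$ above target), so the $k = 2$ term must be treated separately using the exact binomial coefficient before applying the geometric-series argument from $k = 3$ onward.
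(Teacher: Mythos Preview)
Your proposal is correct and essentially identical to the paper's proof: the same union bound over isolated sets of size $k \le n/2$, the same separate treatment of $k=1$ and $k=2$, and the same geometric-series bound $(nq^{n/2})^k$ for $k \ge 3$. The only cosmetic difference is that the paper uses the crude $\binom{n}{k} \le n^k$ already at $k=2$, whereas you keep the exact $\binom{n}{2}$ there; both give $O(n^2 q^{2n}) = o(nq^n)$.
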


\begin{proof}
Say a set $S$ of vertices in $G(n, p)$ is \defn{isolated} if there are no edges between $S$ and the rest of the vertices. For a fixed set $S$ of $k$ vertices,
    \begin{equation*}
        \bP(S \text{ is isolated}) = (1 - p)^{k(n - k)}.
    \end{equation*}
    If $G(n, p)$ is not connected, then there is some non-empty set of at most $n/2$ vertices that is isolated. Thus, taking a union bound and using the inequality $\binom{n}{k} \leq n^k$ gives
    \begin{align*}
        \bP(G(n, p) \text{ is not connected}) & \leq \sum_{k = 1}^{n/2} \binom{n}{k} (1 - p)^{k(n - k)} \\
        & \leq n(1 - p)^{n - 1} + n^2(1 - p)^{2(n - 2)} +  \sum_{k = 3}^{n/2} (n(1 - p)^{n - k})^k \\
        & \leq \OO(n(1 - p)^n) + \sum_{k = 3}^{n/2} (n(1 - p)^{n/2})^k \\
        & \leq \OO(n(1 - p)^n) + \frac{(n(1 - p)^{n/2})^3}{1 - n(1 - p)^{n/2}} \\
        & = \OO(n(1 - p)^n). \qedhere
    \end{align*}
\end{proof}

\begin{thm}\label{thm:domhadGnp}
    For constant $p \in (0, 1)$, asymptotically almost surely, 
    \begin{equation*}
        \domhad(G(n, p)) = (1 + o(1)) \frac{n}{\log_b n},
    \end{equation*}
    where $b = \frac{1}{1 - p}$.
\end{thm}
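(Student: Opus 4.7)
The proof breaks into a matching upper bound and lower bound on $\domhad(G(n, p))$.

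\textbf{Lower bound via greedy construction.} I would fix $t = (1 - \epsilon) n/\log_b n$ and $s = \log_b n + 2 \log_b \log_b n$, and build $T_1, \dots, T_t$ one at a time. Maintain
\[
A_i = \set{v \in V(G) \setminus (T_1 \cup \dots \cup T_{i-1}) : \text{$v$ has a neighbour in each } T_j \text{ for } j < i},
\]
and at step $i$ pick any connected $T_i \subseteq A_i$ of size $s$. Any such choice automatically satisfies the dominating condition for $(T_1, \dots, T_i)$. Via deferred decisions---examining only edges incident to $T_1 \cup \dots \cup T_{i-1}$, and within each selected $T_j$---the edges strictly inside $A_i$ remain fresh, so $G[A_i]$ has the conditional distribution of $G(|A_i|, p)$. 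By \cref{lem:Gnpconnected} and a union bound over the $t$ steps, $G[A_i]$ contains a connected $s$-vertex subgraph a.a.s.\ as long as $|A_i| \gg \log n$. The size evolution $|A_{i+1}| \sim \text{Bin}(|A_i| - s, 1 - q^s)$ has expected loss $s + (|A_i| - s) q^s \leq s + n q^s = s + 1/\log_b^2 n$ per step (since $b^s = n \log_b^2 n$). A Chernoff bound applied step by step gives $|A_{t+1}| \geq n - t s - o(n) \geq \epsilon n/2 \gg s$ a.a.s., so the construction succeeds.

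\textbf{Upper bound via first moment.} I would fix $t = (1 + \epsilon) n / \log_b n$ and let $\mathcal{N}$ count the dominating $K_t$-models in $G(n, p)$. For a fixed ordered tuple with prescribed sizes $(s_1, \dots, s_t)$, the probability the tuple is a dominating $K_t$-model is at most $\prod_{i < j} (1 - q^{s_i})^{s_j}$ (events across pairs $(i, j)$ being independent, as they involve disjoint edge sets). Summing over tuples,
\[
\bE[\mathcal{N}] \;\leq\; \sum_{(s_1, \dots, s_t)} \binom{n}{s_1, \dots, s_t, \, n - \Sigma} \prod_{i < j} (1 - q^{s_i})^{s_j}\,,
\]
where $\Sigma = \sum s_i \leq n$. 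Using $\binom{n}{s_1, \dots, s_t, n - \Sigma} \leq \prod (en/s_i)^{s_i}$ and $(1 - q^{s_i})^{s_j} \leq e^{-s_j q^{s_i}}$, the log of each summand is at most $\sum s_i \log(en/s_i) - \sum_{i < j} s_j q^{s_i}$. The first term is $O(n \log n)$. The condition $\Sigma \leq n$ combined with $t = (1 + \epsilon) n/\log_b n$ forces the average part size $\bar s = \Sigma/t \leq \log_b n/(1 + \epsilon)$, so by convexity of $q^x$ (and a rearrangement argument over the ordered indices), $\sum_{i < j} s_j q^{s_i} = \Omega(t^2 \bar s \, q^{\bar s}) = \Omega(n^{1 + \epsilon/(1 + \epsilon)}/\log_b n)$, which is $\omega(n \log n)$. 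Hence each summand is $\exp(-\omega(n))$; summing over the at most $2^n$ choices of $(s_i)$ gives $\bE[\mathcal{N}] = o(1)$, and Markov finishes.

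\textbf{Main obstacle.} The technical crux is the second-term bound in the upper bound: while uniform sizes $s_i = \bar s$ clearly give $\sum_{i < j} s_j q^{s_i} \gg n \log n$, the sum has an ``ordered'' structure ($i < j$) and one must rule out non-uniform tuples being too small. The cleanest route is probably to split the indices into ``small'' ($s_i \leq \bar s$) and ``large'' classes: at least $t/2$ indices are small, each contributes $q^{s_i} \geq q^{\bar s}$ to its row sum, and pairing each with its $m_i = \sum_{j > i} s_j$ neighbour terms recovers the required bound. A secondary subtlety in the lower bound is that finding a connected $T_i$ inside $G[A_i]$ must not prematurely expose all of $G[A_i]$, since the induced edges on $A_{i+1} \subseteq A_i \setminus T_i$ must remain fresh for the next round; this is handled by running a BFS from a single root in $A_i$ and exposing only the edges needed to identify $s - 1$ neighbours.
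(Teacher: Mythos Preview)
Your outline is sound and both halves can be pushed through, but each differs from the paper's argument, and the upper-bound sketch contains two slips that need repair.

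For the \emph{lower bound}, the paper does something considerably simpler than your greedy edge-exposure scheme: it fixes an equipartition of $V(G)$ into $t = n/((1+\epsilon)\log_b n)$ blocks of size $(1+\epsilon)\log_b n$, shows (via \cref{lem:Gnpconnected} and a union bound) that every block is internally connected w.h.p., and shows (via linearity of expectation and Markov) that at most $\epsilon t$ ordered pairs $(V_i,V_j)$ fail the domination condition; deleting one block per bad pair leaves a dominating $K_{(1-\epsilon)t}$-model. No deferred decisions, no tracking of a shrinking live set. Your approach works too, but the exposure bookkeeping (keeping $G[A_i]$ fresh while extracting a connected $T_i$) is heavier than necessary; in particular, the BFS is a red herring---just take any $s$ vertices of $A_i$ as $T_i$ and expose only the edges inside $T_i$ to check connectivity.

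For the \emph{upper bound}, the paper also runs a first moment, but first observes that if any dominating pseudo-$K_t$-model exists then one exists with $V_1\cup\dots\cup V_t=V(G)$ and $|V_t|=1$ (enlarge $V_1$ to swallow all unused vertices and shrink $V_t$ to a singleton). This cuts the union bound to $t^n$ tuples and pins $\sum_i |V_i| = n$ exactly. After bounding $s_j\geq 1$, the paper applies the AM--GM inequality to $\sum_{i=1}^{t-1}(t-i)q^{s_i}$ and obtains the clean lower bound $\tfrac{(t-1)^2}{e}\,q^{(n-1)/(t-1)}$, which with $t-1=(1+\epsilon)(n-1)/\log_b n$ equals $\Theta(t^2 n^{-1/(1+\epsilon)})=n^{1+\Omega(1)}$ and beats the $n\log t$ from the tuple count.

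Your small/large split is a legitimate alternative, but as written it has two errors. First, ``at least $t/2$ indices have $s_i \leq \bar s$'' is false in general: the mean need not bound the median (e.g.\ $(1,3,3,3)$ has mean $2.5$ but only one term is at most the mean). Second, your asserted bound $\Omega(t^2\,\bar s\, q^{\bar s})$ is too strong by a factor of $\bar s$; the correct and sufficient target is $\Omega(t^2 q^{\bar s})$. Both are easily fixed: with threshold $(1+\delta)\bar s$ for any $0<\delta<\epsilon$, Markov's inequality gives at least $\tfrac{\delta}{1+\delta}\,t$ indices with $s_i\leq(1+\delta)\bar s$, and even if these are all among the last indices one gets
\[
\sum_{i}(t-i)\,q^{s_i}\ \geq\ c_\delta\, t^2\, q^{(1+\delta)\bar s}\ =\ c_\delta\, t^2\, n^{-(1+\delta)/(1+\epsilon)}\ =\ n^{1+\Omega(1)},
\]
which is $\omega(n\log n)$ as needed. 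The paper's AM--GM route delivers this in one line without the auxiliary parameter.
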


\begin{proof}
    We first prove the lower bound. Fix $\epsilon > 0$ and let $t = \frac{n}{(1 + \epsilon) \log_b n}$. Partition $V(G(n, p))$ into $t$ parts, $V_1, \dotsc, V_t$, each of size $n/t = (1 + \epsilon) \log_b n$. We call a part $V_i$ \defn{good} if the subgraph of $G(n, p)$ induced by $V_i$ is connected, and a pair $(V_i, V_j)$ (with $i < j$) \defn{good} if $V_j \subset N(V_i)$.

    Note that $(1-p)^{n/t}=n^{-1-\epsilon}$. 
    Since the graph induced by $V_i$ is $G(n/t, p)$,
    \cref{lem:Gnpconnected} gives
    \begin{equation*}
        \bP(V_i \text{ is bad}) = 
        \OO((1 + \epsilon) (\log_b n ) n^{-1 - \epsilon}).
    \end{equation*}
    By the union bound, the probability that some $V_i$ is bad is 
    $\OO((1 + \epsilon) (\log_b n ) n^{- \epsilon}) = o(1)$. Hence, with high probability, there are no bad parts.

    The probability that a fixed vertex in $V_j$ has no neighbour in $V_i$ is $(1 - p)^{\abs{V_i}}$. Thus,
    \begin{equation}\label{eq:pairgood}
        \bP((V_i, V_j) \text{ is good}) = (1 - (1 - p)^{\abs{V_i}})^{\abs{V_j}}.
    \end{equation}
    Using the inequality $(1 + x)^r \geq 1 + rx$ for $r \geq 1$ and $x \geq -1$ (this is Bernoulli's inequality) we obtain
    \begin{equation*}
        \bP((V_i, V_j) \text{ is bad}) = 1 - (1 - n^{-1 - \epsilon})^{n/t} \leq n^{-\epsilon} t^{-1}.
    \end{equation*}
    Hence, the expected number of bad pairs of parts is at most
    \begin{equation*}
        \tbinom{t}{2} \cdot n^{-\epsilon} t^{-1} \leq t n^{-\epsilon}.
    \end{equation*}
    By Markov's inequality, 
    the probability that there are at least $\epsilon t$ bad pairs is at most $\epsilon^{-1}n^{-\epsilon} = o(1)$. 
    Thus, with high probability, the number of bad pairs is at most $\epsilon t$ and there are no bad parts. In this case, deleting one part from each bad pair leaves a dominating clique model. Thus, with high probability, $G(n, p)$ contains a dominating $K_s$-model where
    \begin{equation*}
        s \geq (1 - \epsilon) t = \frac{1 - \epsilon}{1 + \epsilon} \cdot \frac{n}{\log_b n} \geq (1 - 2 \epsilon) \frac{n}{\log_b n}.
    \end{equation*}
    We now prove the upper bound. Define a \defn{dominating pseudo-$K_t$-model} in a graph $G$ to be a sequence $(V_1, \dotsc, V_t)$ of pairwise disjoint non-empty subsets of $V(G)$, such that $V_j \subset N(V_i)$ for $1 \leq i < j \leq t$ (this is just a dominating $K_t$-model without the connectedness condition). 
    
    Fix $\epsilon > 0$ and take $t - 1 = (1 + \epsilon) \frac{n - 1}{\log_b n}$.
    It suffices to show that $G(n, p)$ does not contain a dominating pseudo-$K_t$-model with high probability. If $(V_1, \dotsc, V_t)$ is a dominating pseudo-$K_t$-model and $v \in V_t$, then $(V(G) \setminus (V_2 \cup \dotsb \cup V_{t - 1} \cup \set{v}), V_2, \dotsc, V_{t - 1}, \set{v})$ is also a dominating pseudo-$K_t$-model. Thus it suffices to show that, with high probability, $G(n, p)$ does not contain a dominating pseudo-$K_t$-model where $\abs{V_t} = 1$ and $V_1 \cup \dotsb \cup V_t = V(G(n, p))$.

    Fix a list of non-empty vertex sets $(V_1, \dotsc, V_t)$ that partition $V(G(n, p))$ and such that $\abs{V_t} = 1$. Then $(V_1, \dotsc, V_t)$ is a dominating pseudo-$K_t$-model if and only if $(V_i,V_j)$ is good for all $1\leq i<j\leq t$. By \eqref{eq:pairgood}, and since $1-x\leq e^{-x}$ for $x\geq 0$, 
    \begin{align*}
        \bP((V_1, \dotsc, V_t) \text{ is a dominating pseudo-$K_t$-model}) & = \prod_{j = 1}^t \prod_{i = 1}^{j - 1} (1 - (1 - p)^{\abs{V_i}})^{\abs{V_j}} \\
        & \leq \prod_{j = 1}^t \prod_{i = 1}^{j - 1} \exp\set[\big]{-\abs{V_j}  (1 - p)^{\abs{V_i}}} \\
        & = \exp\set[\bigg]{-\sum_{1 \leq i < j \leq t} \abs{V_j} (1 - p)^{\abs{V_i}}}.
    \end{align*}
    We now prove a lower bound for the sum in the parentheses. Since each $V_j$ has size at least 1, 
    \begin{equation*}
        \sum_{1 \leq i < j \leq t} \abs{V_j} (1 - p)^{\abs{V_i}} \geq \sum_{1 \leq i < j \leq t} (1 - p)^{\abs{V_i}} = \sum_{i = 1}^{t - 1} (t - i) \cdot (1 - p)^{\abs{V_i}}.
    \end{equation*}
    The AM-GM inequality and the fact that $\abs{V_t} = 1$ gives
    \begin{align*}
        \sum_{i = 1}^{t - 1} (t - i) 
        (1 - p)^{\abs{V_i}} 
        & \geq (t - 1) \, 
        \biggl[\prod_{i = 1}^{t - 1} (t - i)  (1 - p)^{\abs{V_i}}\biggr]^{\frac{1}{t - 1}} \\
		& = (t - 1) \, \biggl[(t - 1)! \, (1 - p)^{\sum_{i = 1}^{t - 1} \abs{V_i}}\biggr]^{\frac{1}{t - 1}} \\
		& = (t - 1) \, [(t - 1)!]^{\frac{1}{t - 1}} \, (1 - p)^{\frac{n - 1}{t - 1}} \\
		& \geq \tfrac{(t - 1)^2}{e}\, (1 - p)^{\frac{n - 1}{t - 1}},
    \end{align*}
    where the final inequality used the lower bound $k! > \bigl(\frac{k}{e}\bigr)^k$, which is valid for all positive integers $k$. In particular,
    \begin{equation*}
        \bP((V_1, \dotsc, V_t) \text{ is a dominating pseudo-$K_t$-model}) 
        \leq 
        \exp\set[\big]{-\tfrac{(t - 1)^2}{e} \, (1 - p)^{\frac{n - 1}{t - 1}}}.
    \end{equation*}
    Since the number of partitions of $V(G(n, p))$ into $t$ parts is $t^n$, 
    \begin{align*}
        \bE(\text{\# of dominating pseudo-$K_t$-models}) & \leq t^n \exp\set[\big]{-\tfrac{(t - 1)^2}{e}\, (1 - p)^{\frac{n - 1}{t - 1}}} \\
        & = \exp\set[\big]{ (n \log t) - \tfrac{(t - 1)^2}{e}\, (1 - p)^{\frac{n - 1}{t - 1}}}.
    \end{align*}
    Recall that $t - 1 = (1 + \epsilon) \frac{n - 1}{\log_b n}$. Thus $(1-p)^{\frac{n-1}{t-1}} = n^{-1/(1+\epsilon)}$ and 
    \begin{equation*}
		n \log t - \tfrac{(t - 1)^2}{e} \, 
        (1 - p)^{\frac{n - 1}{t - 1}} 
        \leq n \log n - \tfrac{(1+\epsilon)^2}{e(\log_bn)^2} \, (n-1)^2 n^{-\frac{1}{1 + \epsilon}}.
	\end{equation*}
    Since $2 - \frac{1}{1 + \epsilon} > 1 + \frac{\epsilon}{2}$ (assuming $\epsilon < 1$), the right-hand side tends to $-\infty$ as $n \to \infty$. Thus, the expected number of dominating pseudo-$K_t$-models, and so the probability that there is a dominating pseudo-$K_t$-model is $o(1)$, as required.
\end{proof}

%%%%%%%%%%%%%%%%%%%%%%%%%%%%%%%%%%%%%%%%
\section{Structure}

This section explores the structure of graphs with no dominating $K_t$-model, and concludes a lower bound on the independence number of such graphs. The proof follows that of \citet{DM82} for $K_t$-minor-free graphs.

\begin{lem}[\citep{DM82}]
\label{DomInd}
Every non-empty connected graph has a connected dominating set $D$ and an independent set $I\subseteq D$ with $\abs{D}=2\abs{I}-1$.
\end{lem}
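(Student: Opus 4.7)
The plan is to construct $D$ and $I$ greedily, enlarging them in tandem while maintaining the invariants that $D$ is connected, $I \subseteq D$ is independent, and $\abs{D} = 2\abs{I} - 1$. Pick any vertex $v_1 \in V(G)$ and initialise $D := \set{v_1}$ and $I := \set{v_1}$; then $\abs{D} = 1 = 2\abs{I} - 1$ and the other invariants hold trivially.

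At each stage, if $D$ already dominates $G$ then stop. Otherwise, since $G$ is connected and finite, choose a vertex $u \in V(G) \setminus N_G[D]$ minimising $\dist_G(u, D)$. The claim is that this minimum distance equals $2$: it is at least $2$ because $u \notin N_G[D]$, and if a shortest $D$-to-$u$ path $d, w_1, \dots, w_{k-1}, u$ had length $k \geq 3$ then $w_{k-1} \notin D$ and $\dist_G(w_{k-1}, D) = k - 1 \geq 2$, so $w_{k-1}$ would be a vertex outside $N_G[D]$ closer to $D$ than $u$, a contradiction. Hence some neighbour $v$ of $u$ is adjacent to some $d \in D$, and $v \notin D$ (otherwise $u \in N_G[D]$). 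Update $D := D \cup \set{v, u}$ and $I := I \cup \set{u}$.

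This update preserves all invariants: $D$ stays connected via the path $d$--$v$--$u$; the new vertex $u$ has no neighbour in $D \supseteq I$, so $I$ stays independent; and both $\abs{D}$ and $2\abs{I} - 1$ grow by exactly $2$. Since $\abs{D}$ strictly increases each iteration and $G$ is finite, the process terminates with $D$ a connected dominating set and $I \subseteq D$ an independent set satisfying $\abs{D} = 2\abs{I} - 1$. The only technical point is the distance-$2$ claim, essentially the observation that a vertex in $V(G) \setminus N_G[D]$ closest to $D$ is reached from $D$ via a single intermediate vertex of $N_G(D) \setminus D$; I do not anticipate any further obstacle, as everything else is a bookkeeping verification.
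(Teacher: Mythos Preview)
Your proof is correct and is essentially the same argument as the paper's: both start from a single vertex and extend $D$ by a pair $\{v,u\}$ with $\dist_G(v,D)=1$ and $\dist_G(u,D)=2$, adding $u$ to $I$. The only cosmetic difference is that the paper phrases this as a maximality argument (take the largest connected $D$ admitting such an $I$, and derive a contradiction if it fails to dominate), whereas you phrase it as an explicit greedy iteration; the extension step and its verification are identical.
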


\begin{proof}
    Let $D$ be the largest connected set in $G$ such that $D$ contains an independent set $I$ with $\abs{D}=2\abs{I}-1$. This is well-defined since $D=I=\set{v}$ satisfies these properties for any vertex $v$. 
    Suppose for the sake of contradiction that $D$ is not dominating. Since $G$ is connected, there exists an edge $xy$ in $G$ with $\dist_G(x,D)=1$ and $\dist_G(y,D)=2$. 
    Thus $D' \coloneqq D\cup\set{x,y}$ is a connected set and $I' \coloneqq I\cup\set{y}$ is an independent set in $D'$ with $\abs{D'} = 2\abs{I'} - 1$, contradicting the choice of $D$ and $I$. Hence $D$ is dominating. 
\end{proof}

A \defn{partition} of a graph $G$ is a partition of $V(G)$ into non-empty sets. Each element of a partition is called a \defn{part}.
Let $\PP$ be a partition of a graph $G$. The \defn{quotient} of $\PP$ is the graph with vertex-set $\PP$ where distinct $A,B\in\PP$ are adjacent if and only if there is an edge of $G$ between $A$ and $B$. If $H$ is isomorphic to the quotient of $\PP$, then $\PP$ is called an \defn{$H$-partition} of $G$. We say $\PP$ is \defn{connected} if each part induces a connected subgraph of $G$, in which case the quotient is a minor of $G$. 

The \defn{depth} of a vertex $v$ in a tree rooted at $r$ is the number of vertices in the $vr$-path in $T$. The \defn{closure} of a rooted tree $T$ is the graph \defn{$\widehat{T}$} with vertex-set $V(T)$, where $vw$ is an edge of $\widehat{T}$ if and only if $v$ is an ancestor or descendent of $w$ in $T$. A graph that is isomorphic to the closure of a tree is called \defn{trivially perfect}, a \defn{comparability graph of a tree}, an \defn{arborescent comparability graph}, or a \defn{quasi-threshold graph} \citep{TPG}. They are the graphs that contain neither $P_4$ nor $C_4$ as induced subgraphs.

\begin{thm}
\label{Structure}
For every connected graph $G$ there is a tree $T$ rooted at $r$ such that $G$ has a connected $\widehat{T}$-partition $\PP$ where:
\begin{itemize}
\item for every part $P\in\PP$ there is an independent set $I$ in $G[P]$ with $\abs{P} = 2\abs{I}-1$, and
\item for every leaf $v$ of $T$, the sequence of parts in $\PP$ from $r$ to $v$ define a dominating complete graph model in $G$.
\end{itemize}
\end{thm}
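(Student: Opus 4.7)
My plan is to prove the theorem by induction on $\abs{V(G)}$. The base case $\abs{V(G)} = 1$ is immediate: take $T$ to be a single root $r$ and $\PP = \{V(G)\}$, which trivially satisfies both conditions (with $I = V(G)$, and with the one-element sequence being a dominating $K_1$-model).

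For the inductive step, I would first apply \cref{DomInd} to $G$ to obtain a connected dominating set $D \subseteq V(G)$ together with an independent set $I \subseteq D$ such that $\abs{D} = 2\abs{I} - 1$. This $D$ will become the root part, assigned to a new root vertex~$r$. Let $C_1, \dotsc, C_k$ be the components of $G - D$ (with $k = 0$ allowed, in which case $D = V(G)$ and there is nothing more to do). Each $C_j$ is connected with strictly fewer vertices than $G$, so by induction it admits a rooted tree $T_j$ (rooted at $r_j$) and a connected $\widehat{T_j}$-partition $\PP_j$ of $C_j$ satisfying both conclusions. I would then form $T$ by making $r$ the parent of each $r_j$ (so the subtree hanging off $r_j$ is $T_j$), and set $\PP := \{D\} \cup \PP_1 \cup \dotsb \cup \PP_k$ with $D$ assigned to $r$.

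The verification then has three pieces. First, each part induces a connected subgraph of $G$: the part $D$ by \cref{DomInd}, the others by induction. Second, the independent-set condition $\abs{P} = 2\abs{I_P} - 1$ holds for $D$ by \cref{DomInd} and for the remaining parts by induction. Third, $\PP$ is a $\widehat{T}$-partition: within each $\PP_j$, quotient adjacency corresponds to ancestor/descendant in $T_j$ by induction, which matches $\widehat{T}$ on the subtree below $r_j$; parts from distinct $\PP_j$ and $\PP_{j'}$ lie in distinct components of $G - D$ and so are non-adjacent in both the quotient and $\widehat{T}$; and $D$ is adjacent in the quotient to every other part precisely because $D$ dominates $V(G) \setminus D$, matching the fact that $r$ is adjacent to every other vertex of $\widehat{T}$. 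Finally, for any leaf $v$ of $T$ with root-to-$v$ sequence of parts $(D, P_1, \dotsc, P_\ell)$, the tail $(P_1, \dotsc, P_\ell)$ is a dominating clique model in $C_j$ by induction, and every vertex of $P_i$ ($i \geq 1$) has a neighbour in $D$ because $D$ dominates $V(G) \setminus D$; so the full sequence is a dominating $K_{\ell + 1}$-model in $G$.

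The only genuinely subtle step is the $\widehat{T}$-quotient check, and it reduces entirely to the dominating property of $D$ (giving the ``root is adjacent to all descendants'' edges) together with the fact that the $C_j$ are components of $G - D$ (giving the absence of edges between incomparable subtrees). Everything else is straightforward transcription of \cref{DomInd} and the inductive hypotheses.
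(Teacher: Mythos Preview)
Your proposal is correct and follows essentially the same approach as the paper: induction on $\abs{V(G)}$, using \cref{DomInd} to extract a connected dominating set $D$ as the root part, applying the inductive hypothesis to each component of $G-D$, and attaching the resulting rooted trees below a new root. Your verification of the $\widehat{T}$-partition property is in fact more detailed than what the paper provides.
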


\begin{proof}
We proceed by induction on $\abs{V(G)}$. The case $\abs{V(G)}=1$ is trivial. By \cref{DomInd}, $G$ has a connected dominating set $D$ and an independent set $I\subseteq D$ with $\abs{D}=2\abs{I}-1$. Let $G_1,\dots,G_c$ be the components of $G-D$. By induction, for each $i\in\set{1,\dots,c}$, there is a tree $T_i$ rooted at $r_i$ such that $G_i$  has a $\widehat{T_i}$-partition $\PP_i$ such that:
\begin{itemize}
\item for every part $P\in\PP_i$ there is an independent set $I$ in $G_i[P]$ with $\abs{P}=2\abs{I}-1$, and
\item for every leaf $v$ of $T_i$, the sequence of parts in $\PP_i$ from $r$ to $v$ define a dominating  complete graph model in $G_i$.
\end{itemize}
Let $T$ be the tree obtained from the disjoint union of $T_1,\dots,T_c$ by adding a new root vertex $r$ adjacent to $r_1,\dots,r_c$. So $\widehat{T}$ is obtained from the disjoint union of $\widehat{T_1},\dots,\widehat{T_c}$ by adding vertex $r$, adjacent to every other vertex. Let $\PP$ be the $\widehat{T}$-partition of $G$ obtained by associating $D$ with $r$. Since $D$ is dominating, for every leaf $v$ of $T$, the sequence of parts in $\PP$ from $r$ to $v$ define a dominating  complete graph model in $G$.
\end{proof}

\begin{thm}
\label{BigIndSet}
For $t \geq 2$ and every graph $G$ with no dominating $K_t$-model, there is an integer $h\leq t-1$ and an $h$-colourable induced subgraph of $G$ on at least $\frac{\abs{V(G)}+h}{2}$ vertices.
In particular, 
\begin{equation*}
    \alpha(G) \geq \tfrac{\abs{V(G)}+t-1}{2t-2}.
\end{equation*}
\end{thm}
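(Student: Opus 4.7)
The plan is to invoke Theorem~\ref{Structure} on each component of $G$, pool one independent set from every part, and use tree depth as the proper colouring. First I would apply \cref{Structure} to each connected component $G_i$ of $G$, obtaining a rooted tree $T_i$ and a connected $\widehat{T_i}$-partition $\PP_i$ of $G_i$ satisfying the two listed properties. Write $h_i$ for the maximum vertex-depth of $T_i$. By the second property, the parts of $\PP_i$ along a root-to-leaf path achieving this maximum depth form a dominating $K_{h_i}$-model in $G_i \subseteq G$, so the hypothesis forces $h_i \leq t-1$. Set $h := \max_i h_i \leq t - 1$.

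Using the first property, for each part $P$ pick an independent set $I_P \subseteq P$ with $\abs{I_P} = (\abs{P}+1)/2$, and let $U := \bigcup_i \bigcup_{P \in \PP_i} I_P$. Then
\begin{equation*}
\abs{U} \;=\; \sum_i \sum_{P \in \PP_i} \tfrac{\abs{P}+1}{2} \;=\; \tfrac{\abs{V(G)} + \sum_i \abs{\PP_i}}{2} \;\geq\; \tfrac{\abs{V(G)} + h}{2},
\end{equation*}
since the $T_i$ of largest depth contains at least $h$ vertices along its deepest root-to-leaf path. Next I would colour each vertex of $U$ by the depth in $T_i$ of the part containing it, using at most $h$ colours. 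This colouring is proper: two vertices in a common $I_P$ are non-adjacent by choice; two vertices in different parts of the same $\PP_i$ at equal depth correspond to incomparable vertices of $T_i$, hence non-adjacent vertices of $\widehat{T_i}$, so there are no edges between those parts in $G_i$; and distinct components contribute no edges in $G$. Thus $G[U]$ is a properly $h$-coloured induced subgraph of $G$ on at least $\frac{\abs{V(G)}+h}{2}$ vertices, which establishes the first conclusion.

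For the ``in particular'' clause, the largest colour class of $G[U]$ is an independent set of size at least $\abs{U}/h \geq (\abs{V(G)}+h)/(2h)$; since $x \mapsto (\abs{V(G)}+x)/(2x)$ is decreasing in $x$ and $h \leq t - 1$, this is at least $\frac{\abs{V(G)}+t-1}{2t-2}$. The only slightly delicate step is verifying that the depth colouring is proper; this hinges on the elementary fact that two distinct vertices at the same depth of a rooted tree are incomparable and hence non-adjacent in its closure, which is essentially the defining property of closures of rooted trees. Everything else is routine bookkeeping on the numerics handed over by \cref{Structure}.
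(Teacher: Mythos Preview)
Your proof is correct and follows essentially the same approach as the paper: apply \cref{Structure}, take the union of the per-part independent sets, colour by depth in the tree, and bound $h$ via the dominating model along a deepest root--leaf path. The only differences are cosmetic---you handle components explicitly while the paper reduces to the connected case, and you spell out the properness of the depth colouring in more detail than the paper does.
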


\begin{proof}
We may assume $G$ is connected. 
Apply \cref{Structure} to $G$. Let $T$ be the resulting tree, and let $\PP$ be the resulting $\widehat{T}$-partition of $G$. 
Let $h$ be the maximum depth of a node in $T$. 
Since $G$ has no dominating $K_t$-model, $h \leq t-1$. 
Each part $A \in \PP$ contains can independent set $I_A$ of size $(\abs{A} + 1)/2$. Let $X$ be the union of the $I_A$. Then
\begin{equation*}
    \abs{X} \geq \sum_{A\in\PP}\tfrac{\abs{A}+1}{2} = \tfrac{\abs{V(G)}+\abs{\PP}}{2} \geq \tfrac{\abs{V(G)}+h}{2}.
\end{equation*}
Colour each vertex in $I_A$ by the depth of the corresponding node in $T$. We obtain a proper $h$-colouring of $X$. Taking the largest colour class in $X$, 
\begin{equation*}
    \alpha(G)\geq \tfrac{\abs{V(G)}+h}{2h} = \tfrac{\abs{V(G)}}{2h} +\tfrac{1}{2} \geq \tfrac{\abs{V(G)}}{2t-2}+\tfrac{1}{2} = 
    \tfrac{\abs{V(G)}+t-1}{2t-2}.\qedhere
\end{equation*}
\end{proof}

%%%%%%%%%%%%%%%%%%%%%%%%%%%%%%%%%%%%%%
\section{Average Degree}

This section considers the maximum average degree of a graph with no dominating $K_t$-model. \Cref{thm:domhadGnp} implies the following.

\begin{cor}\label{cor:tlogt}
    For any $\epsilon > 0$ and for sufficiently large $t$, there is a graph $G$ with no dominating $K_t$-model and average degree at least $(1 - \epsilon) t \ln t$.
\end{cor}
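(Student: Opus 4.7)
The plan is to take $G = G(n,p)$ for a constant $p > 0$ depending only on $\epsilon$ and $n = n(t)$ depending on $t$, and invoke \cref{thm:domhadGnp}. Given $\epsilon > 0$, I would first choose $p \in (0,1)$ small enough that $p/\ln b \geq 1 - \epsilon/2$, where $b = \tfrac{1}{1-p}$. Such a $p$ exists because $\ln b = -\ln(1-p) = p + \tfrac{p^2}{2} + \tfrac{p^3}{3} + \cdots$, so $p/\ln b \to 1$ as $p \to 0^+$.

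Next, for each sufficiently large $t$, I would set $n = \lfloor (1 - \epsilon/4)\, t \ln t / \ln b \rfloor$, so that $\ln n = (1 + o(1))\ln t$. \Cref{thm:domhadGnp} then gives, asymptotically almost surely,
\begin{equation*}
    \domhad(G(n, p)) = (1 + o(1))\, \frac{n \ln b}{\ln n} = (1 + o(1))(1 - \epsilon/4)\, t,
\end{equation*}
which is strictly less than $t$ once $t$ is large enough. Separately, the number of edges of $G(n,p)$ is binomial with mean $\binom{n}{2} p$, so a standard concentration argument (e.g.\ Chernoff's inequality) yields average degree $(1+o(1))(n - 1) p$ asymptotically almost surely. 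This equals
\begin{equation*}
    (1+o(1))(1 - \epsilon/4)\,\tfrac{p}{\ln b}\, t \ln t \geq (1+o(1))(1-\epsilon/4)(1-\epsilon/2)\, t \ln t \geq (1-\epsilon)\, t \ln t
\end{equation*}
for $t$ large. Since both events hold with probability $1 - o(1)$, the desired graph $G$ must exist.

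The main obstacle will be the calibration between $p$ and $n$: I need the leading constant in the average degree to reach $(1-\epsilon)\,t \ln t$ while simultaneously forcing $\domhad(G(n,p)) < t$. The key observation is that $p/\ln b \to 1$ as $p \to 0^+$, which lets the multiplicative constant in the average degree be pushed arbitrarily close to $1$; this is in sharp contrast to any $p$ bounded away from $0$ (such as $p = \tfrac{1}{2}$), for which the analogous calculation yields a bound strictly smaller than $t \ln t$.
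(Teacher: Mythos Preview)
Your proposal is correct and follows essentially the same approach as the paper: take $G(n,p)$ with $p$ small so that $p/\ln b$ is close to $1$, apply \cref{thm:domhadGnp} to bound $\domhad$, and use concentration of the edge count for the average degree. The only cosmetic difference is that the paper fixes $n$ and defines $t$ in terms of $n$, whereas you fix $t$ and solve for $n$; your parametrization is arguably cleaner for the statement as phrased.
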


\begin{proof}
    Note that $\ln(\frac{1}{1 - x})/x \to 1$ as $x \to 0$. Fix $\epsilon > 0$ small and let $p > 0$ be small enough so that $\ln(\frac{1}{1 - p}) < (1 + \tfrac{\epsilon}{3}) p$. For large $n$, with high probability, $G(n, p)$ has average degree at least $(1 - \epsilon^2) pn$, and by \cref{thm:domhadGnp},
    \begin{equation*}
        \domhad(G(n,p)) \leq (1 + \tfrac{\epsilon}{3})\, \tfrac{n}{\log_b n} 
        = (1 + \tfrac{\epsilon}{3})\, \tfrac{n \ln b}{\ln n} 
        < (1 + \tfrac{\epsilon}{3})^2\, \tfrac{pn}{\ln n} 
        <  (1 + \epsilon) \,\tfrac{pn}{\ln n},
    \end{equation*}
    Pick an instance $G$ of $G(n, p)$ satisfying both these conditions. Then $G$ has no dominating $K_t$-model where $t \coloneqq (1 + \varepsilon) \frac{pn}{\log n}$. Finally, 
    \begin{equation*}
        (1 - \epsilon)\, t \ln t 
        \leq (1 - \epsilon) (1 + \epsilon) \,\tfrac{pn}{\ln n} \cdot \ln n 
        = (1 - \epsilon^2) \, pn
    \end{equation*}
    which is at most the average degree of $G$, as required.
\end{proof}

\Citet{Mader67} first showed that every graph with sufficiently high average degree contains a $K_t$-minor. The proof method actually shows the following:

\begin{thm}
\label{AverageDegree}
For $t\geq 2$ every graph with average degree at least $2^{t-2}$ has a dominating $K_t$-model.
\end{thm}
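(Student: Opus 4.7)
The plan is a proof by induction on $t$, following Mader's classical argument for $K_t$-minors. The base case $t = 2$ is immediate: a graph with average degree at least $1$ contains an edge, which is a dominating $K_2$-model.

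For the inductive step, I would assume the result for $t - 1$ and suppose for contradiction that $G$ is a minimum counterexample, i.e.\ $G$ has average degree at least $2^{t-2}$, no dominating $K_t$-model, and $\abs{V(G)}$ is as small as possible. A routine vertex-deletion argument gives $\delta(G) \geq 2^{t-3}+1$: deleting any vertex of degree at most $2^{t-3}$ would preserve the condition $\abs{E} \geq 2^{t-3}\abs{V}$ while decreasing $\abs{V(G)}$, contradicting minimality.

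The heart of the argument is the claim that $\abs{N(u) \cap N(v)} \geq 2^{t-3}$ for every edge $uv$. As in Mader's proof, I would argue by contradiction via edge contraction: if some edge $uv$ had $\abs{N(u) \cap N(v)} < 2^{t-3}$, then $G/uv$ would have $\abs{V(G)}-1$ vertices and at least $2^{t-3}(\abs{V(G)}-1)$ edges, so by minimality it would contain a dominating $K_t$-model that one would then lift back to $G$. Once the common-neighbourhood bound is in hand, every neighbour $u$ of any vertex $v$ has at least $2^{t-3}$ neighbours inside $N(v)$, so $G[N(v)]$ has minimum (and hence average) degree at least $2^{t-3}$. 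By the inductive hypothesis $G[N(v)]$ contains a dominating $K_{t-1}$-model $(S_1, \dots, S_{t-1})$; prepending $\set{v}$ then yields the dominating $K_t$-model $(\set{v}, S_1, \dots, S_{t-1})$ of $G$ (via \cref{obs:domtree}), contradicting the choice of $G$.

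The main obstacle is the lifting step. Given a dominating $K_t$-model $(T_1, \dots, T_t)$ of $G/uv$ with the contracted vertex $w$ in some part $T_i$, the natural candidate is $T_i^* := (T_i \setminus \set{w}) \cup \set{u,v}$; this is connected in $G$ and dominates each $T_j$ for $j > i$ without trouble. The difficulty is the opposite direction: for each $j < i$ both $u$ and $v$ must have a neighbour in $T_j$, whereas the $G/uv$-model only guarantees that $w$ does, which translates to one (but not necessarily both) of $u,v$ having such a neighbour. When $i = 1$ the condition is vacuous and the lift succeeds, so the task reduces to arranging a dominating $K_t$-model of $G/uv$ with $w \in T_1$, perhaps by a local swap exploiting the minimality of $G$. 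This is where the principal technical work lies.
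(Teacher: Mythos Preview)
Your outline is essentially the paper's argument, and you have correctly pinpointed the only real difficulty: after contracting $uv$ to $w$, the dominating $K_t$-model of $G/uv$ lifts cleanly only when $w\in T_1$. However, your suggested fix --- a ``local swap exploiting the minimality of $G$'' --- does not work: there is no evident way to move $w$ into $T_1$ after the fact without destroying connectivity of the part it leaves or the domination conditions already in place, and minimality of $G$ gives you no leverage on the \emph{position} of $w$ inside a model of $G/uv$.

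The missing idea, and this is exactly what the paper does, is to \emph{strengthen the induction hypothesis} so that the desired position is built in: prove, by induction on $\abs{V(G)}+t$, that for every connected $G$ with average degree at least $2^{t-2}$ and \emph{every} vertex $v$ of $G$, there is a dominating $K_t$-model $(T_1,\dots,T_t)$ with $v\in V(T_1)$. With this hypothesis you only need the common-neighbourhood bound for edges incident to the given $v$. If some neighbour $w$ of $v$ satisfies $\abs{N(v)\cap N(w)}\leq 2^{t-3}-1$, contract $vw$ to $v'$ and apply the stronger hypothesis to the pair $(G/vw,\,v')$; the resulting model has $v'\in T_1$ by construction, and your own lifting argument then succeeds with $v\in T_1'$. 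Otherwise every edge at $v$ lies in at least $2^{t-3}$ triangles, so $G[N(v)]$ has minimum degree at least $2^{t-3}$; apply the hypothesis for $t-1$ and prepend $\set{v}$ as you described. Your vertex-deletion argument for $\delta(G)\geq 2^{t-3}+1$ is correct but becomes unnecessary once the hypothesis is strengthened.
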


\begin{proof}
We proceed by induction on $n+t$ with the following hypothesis: for every connected $n$-vertex graph $G$ with average degree at least $2^{t-2}$ and for every vertex $v$ of $G$, there exists a dominating $K_t$-model $(T_1,\dots,T_t)$ in $G$ with $v\in V(T_1)$. This implies the claim since every graph with average degree at least $2^{t-2}$ has a connected component with average degree at least $2^{t-2}$.

The $t = 2$ case holds with $T_1 = G[\set{v}]$ and $T_2 = G[\set{w}]$ where $w$ is any neighbour of $v$ (which exists since $G$ is connected with average degree at least $1$).
If $n\leq 2^{t-2}+1$ then $G$ is complete, and the result holds trivially. 
Now assume that $t\geq 3$ and $n>2^{t-2}+1$. 
Let $G$ be a connected $n$-vertex $m$-edge graph $G$ with average degree at least $2^{t-2}$, and let $v$ be a vertex of $G$. So $2m\geq 2^{t-2}n$.

For each neighbour $w$ of $v$, let $d_w$ be the number of common neighbours of $v$ and $w$. Suppose that $d_w\leq 2^{t-3}-1$ for some neighbour $w$ of $v$. Let $G'$ be the graph obtained from $G$ by contracting $vw$ into a new vertex $v'$. So $G'$ is connected, $\abs{E(G')}=m-1-d_w$ and $\abs{V(G')}=n-1$. Thus $G'$ has average degree
\begin{equation*}
    \frac{2( m-1-d_w ) }{n-1} \geq \frac{ 2^{t-2}n - 2(1+d_w) }{n-1} \geq \frac{ 2^{t-2}n - 2^{t-2} }{n-1} = 2^{t-2}.
\end{equation*}
By induction, there exists a dominating $K_t$-model $(T_1,\dots,T_t)$ in $G'$ with $v'\in V(T_1)$. 
Let $T'_1$ be the tree obtained from $T_1$ by replacing $v'$ by $vw$, and replacing each edge $v'x$ in $T_1$ by $vx$ or $wx$ (one of which must exist). 
By construction, every vertex adjacent to $v'$ in $G'$ is adjacent to $v$ or $w$ in $G$, 
Thus  $(T'_1,T_2,\dots,T_t)$ is a dominating $K_t$-model in $G$ with $v\in V(T'_1)$, as desired. 

Now assume that every edge incident to $v$ is in at least $2^{t-3}$ triangles. Since $G$ is connected and with at least two vertices, there is at least one edge incident to $v$. Thus $G[N_G(v)]$ has minimum degree at least $2^{t-3}$. By induction, $G[N_G(v)]$ has a dominating $K_{t-1}$-model $(T_1,\dots,T_{t-1})$. Thus $(\set{v}, T_1, \dotsc, T_{t-1})$ is the desired dominating $K_{t}$-model in $G$.
\end{proof}

We conjecture that average degree $\Omega(t \log t)$ is enough to force a dominating $K_t$-model (and \cref{cor:tlogt} shows this would be best possible). As a step towards this conjecture, we show it is true for graphs with linear minimum degree. The following proofs make no attempt to optimise constants. The first lemma is well-known.

\begin{lem}
\label{FindConnDomSet}
Every connected graph $G$ with $n$ vertices and minimum degree  $\delta>\ln(2n)$ has a connected dominating set on at most $\frac{6n\ln(2n)}{\delta}$ vertices.    
\end{lem}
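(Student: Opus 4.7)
The plan is the standard two-step approach: first use a random sampling argument to produce a (possibly disconnected) dominating set of size roughly $\frac{n\ln(2n)}{\delta}$, then pay at most a factor of $3$ to stitch the pieces together into a connected dominating set.

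\textbf{Step 1 (random dominating set).} Include each vertex of $G$ independently with probability $p=\frac{\ln(2n)}{\delta}$, which is less than $1$ by the hypothesis $\delta>\ln(2n)$, to form a random set $S$. The expected size is $pn$. For any vertex $v$, the probability that $N_G[v]$ avoids $S$ is at most $(1-p)^{\delta+1}\le e^{-p\delta}=\frac{1}{2n}$, so the expected number of vertices not dominated by $S$ is at most $\frac12$. Adding these undominated vertices to $S$ gives a dominating set $D_0$ of expected size at most $pn+\frac12$, so some such $D_0$ satisfies $\abs{D_0}\le \frac{n\ln(2n)}{\delta}+1$.

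\textbf{Step 2 (connecting).} Let $C_1,\dots,C_k$ be the components of $G[D_0]$. I claim that while $k\ge 2$ there are two components that can be merged by adjoining at most two vertices from $V(G)\setminus D_0$. Since $G$ is connected, take a shortest path $P=a_0a_1\cdots a_m$ between distinct components, with $a_0,a_m\in D_0$ and $a_1,\dots,a_{m-1}\notin D_0$. One can rule out $m\ge 4$: the internal vertex $a_2$ is not in $D_0$, so it has a neighbour $b\in D_0$; minimality of $P$ forces $b\notin\{a_0,a_m\}$, and then $b$ together with either $a_0,a_1,a_2$ or $a_2,a_3,\dots,a_m$ exhibits a pair of components joined by a path with at most $2$ internal non-$D_0$ vertices. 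So $m\le 3$, and in either case we add at most two new vertices to $D_0$ and reduce the number of components by at least one. Iterating until $G[D_0]$ becomes connected adds at most $2(\abs{D_0}-1)$ vertices, yielding a connected dominating set of size at most $3\abs{D_0}-2\le 3\cdot\frac{n\ln(2n)}{\delta}+1$, which is well within $\frac{6n\ln(2n)}{\delta}$.

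The main obstacle is Step~2, specifically justifying the ``path of length at most $3$'' claim between components; I expect this to be routine once one exploits the shortest-path minimality and the fact that every vertex outside $D_0$ has a neighbour in $D_0$. The random sampling in Step~1 is textbook, and the slack built into the constant $6$ leaves plenty of room to absorb the $+1$ additive error and the $3\abs{D_0}-2$ bridging blow-up.
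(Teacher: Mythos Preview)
Your proposal is correct and matches the paper's approach: random sampling to produce a small dominating set, followed by a factor-$3$ blow-up to make it connected. The only differences are cosmetic---the paper uses two Markov inequalities plus a union bound in Step~1 (yielding size $<2pn$ rather than your tighter $pn+\tfrac12$) and simply cites Duchet--Meyniel for the $3k-2$ connection bound rather than proving it inline as you do; your shortest-path argument for $m\le 3$ goes through once phrased as a direct contradiction to minimality (the neighbour $b\in D_0$ of $a_2$ gives either $a_0a_1a_2b$ of length $3$ or $ba_2\cdots a_m$ of length $m-1$, both shorter than $m$).
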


\begin{proof}
Let $p \coloneqq \ln(2n)/\delta<1$. For each vertex $v$, choose $v$ independently at random with probability $p$. Let $A$ be the set of chosen vertices. Let $B$ be the set of vertices $v$ such that $N_G(v)\cap A=\emptyset$. Thus $\mathbb{P}(v\in B)=(1-p)^{\deg(v)} < e^{-p\delta} = \frac{1}{2n}$. Thus $\mathbb{E}(\abs{B})<\frac{1}{2}$, and by Markov's Inequality, $\mathbb{P}(\abs{B} \geq 1) \leq \frac{\mathbb{E}(X)}{1}< \frac{1}{2}$. Now consider $\abs{A}$: $\mathbb{E}(\abs{A}) = pn$ and so, by Markov's Inequality, $\mathbb{P}(\abs{A} \geq 2pn) \leq \frac{1}{2}$. A union bound gives $\mathbb{P}(\abs{B} \geq 1 \text{ or } \abs{A}\geq 2pn) < 1$.
Hence, there exists $A \subseteq V(G)$ with $\abs{A}< 2pn = \frac{2n\ln(2n)}{\delta}$ and $B=\emptyset$. Since $B = \emptyset$, $A$ is a dominating set. 
The result follows, since \citet{DM82} showed that if a connected graph has a dominating set on $k$ vertices, then its has a connected dominating set on at most $3k-2$ vertices. 
\end{proof}

The next lemma is similar to several results in the literature on $K_t$-minors~\citep{HW16,KP08}.

\begin{lem}
\label{FindDomModel}
Fix $c\in(0,1)$ and $n$ such that $cn>\ln(2n)$. Then every graph $G$ with at most $n$ vertices and minimum degree at least $cn + 6c^{-1}t\ln(2n)$ has a dominating $K_t$-model.
\end{lem}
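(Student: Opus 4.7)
My plan is to induct on $t$. For the base case $t = 1$, the minimum degree hypothesis forces $G$ to be non-empty, so any single vertex gives a dominating $K_1$-model. For the inductive step, I first reduce to the case that $G$ is connected by passing to any connected component, which inherits both the vertex bound and the minimum degree lower bound. Since $\delta(G) \geq cn + 6c^{-1}t \ln(2n) > \ln(2n)$, \cref{FindConnDomSet} applies and produces a connected dominating set $D \subseteq V(G)$ with
\[
|D| \;\leq\; \frac{6\,|V(G)|\,\ln(2|V(G)|)}{\delta(G)} \;\leq\; \frac{6n\ln(2n)}{cn} \;=\; 6c^{-1}\ln(2n).
\]

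I then set $T_1 := D$ (which is non-empty and connected) and consider $H := G[V(G) \setminus D]$. Because $D$ is dominating we have $V(H) = N_G(T_1)$, so by \cref{obs:domtree} it suffices to produce a dominating $K_{t-1}$-model in $H$. Each vertex of $H$ loses at most $|D| \leq 6c^{-1}\ln(2n)$ neighbours when moving from $G$ to $H$, so
\[
\delta(H) \;\geq\; \delta(G) - |D| \;\geq\; cn + 6c^{-1}(t-1)\ln(2n),
\]
while $|V(H)| \leq n$ and the condition $cn > \ln(2n)$ is unchanged. The inductive hypothesis applied to $H$ with parameter $t-1$ yields a dominating $K_{t-1}$-model $(T_2,\dotsc,T_t)$ in $H$; prepending $T_1 = D$ gives the desired dominating $K_t$-model in $G$, since every vertex of any $T_j$ with $j \geq 2$ lies in $V(G)\setminus D$ and therefore has a neighbour in $D$ by domination.

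The main obstacle is lining up the minimum-degree budget: the $6c^{-1}t\ln(2n)$ slack in the hypothesis is exactly what is needed to absorb one call to \cref{FindConnDomSet} per level of the induction, leaving $6c^{-1}(t-1)\ln(2n)$ for the next round. The $cn$ term simultaneously guarantees that \cref{FindConnDomSet} returns a dominating set of size only $O(c^{-1}\ln(2n))$ and that its hypothesis $\delta > \ln(2n)$ remains satisfied throughout the recursion; beyond this the verification is routine.
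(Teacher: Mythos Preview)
Your proof is correct and follows essentially the same approach as the paper: induct on $t$, reduce to a connected graph, apply \cref{FindConnDomSet} to extract a connected dominating set of size at most $6c^{-1}\ln(2n)$, and recurse on its complement with the slack in the minimum-degree hypothesis absorbing the loss. Your write-up is slightly more detailed (invoking \cref{obs:domtree} explicitly and tracking $\abs{V(G)}\leq n$ when bounding $\abs{D}$), but the argument is the same.
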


\begin{proof}
We proceed by induction on $t$. The case $t=1$ is trivial. 
We may assume that $G$ is connected, since any component $G'$ of $G$ has $\delta(G')\geq\delta(G)\geq cn + 6c^{-1}t\ln(2n)$ and $\abs{V(G')} \leq \abs{V(G)} \leq n$. 
Since $G$ has minimum degree at least $cn$, by \cref{FindConnDomSet}, $G$ has a connected dominating set $A$ with $\abs{A} \leq 6c^{-1}\ln(2n)$. 
Let $G' \coloneqq G-A$. So $G'$ has at most $n$ vertices and minimum degree at least 
$cn + 6c^{-1}(t-1)\ln(2n)$. 
By induction, $G'$ has a dominating $K_{t-1}$-model $(A_1,\dots,A_{t-1})$. 
Since $A$ is dominating, $(A,A_1,\dots,A_{t-1})$ is a dominating $K_t$-model in $G$. 
\end{proof}

\begin{prop}
For any $c\in(0,1)$ and sufficiently large integers $n,t$ with  $n\geq c^{-2} t\log_2 t$, every graph with $n$ vertices and minimum degree at least $2cn$ has a dominating $K_t$-model.
\end{prop}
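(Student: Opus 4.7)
The plan is to apply \cref{FindDomModel} directly to $G$, taking the lemma's constant to be the proposition's $c$ and the lemma's ambient vertex bound to be $n$. The dominating $K_t$-model produced by the lemma is then exactly the conclusion sought. I must verify \cref{FindDomModel}'s two hypotheses: first, that $cn > \ln(2n)$, which is immediate for fixed $c \in (0,1)$ and sufficiently large $n$; and second, that $\delta(G) \geq cn + 6c^{-1}t\ln(2n)$.

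Since $\delta(G) \geq 2cn$, the second hypothesis reduces to the inequality
\begin{equation*}
c^2 n \geq 6t\ln(2n).
\end{equation*}
The proposition's hypothesis $n \geq c^{-2}t\log_2 t$ is equivalent to $c^2 n \geq t\log_2 t$. To bridge these, I would split on the size of $n$. When $n$ is substantially larger than $c^{-2}t\log_2 t$, the quantity $c^2 n$ dominates $6t\ln(2n)$ trivially, since the former grows linearly in $n$ while the latter grows only logarithmically. In the tight regime $n \approx c^{-2}t\log_2 t$, the estimate $\ln(2n) = (1+o(1))\ln t$ (valid as $t \to \infty$ with $c$ fixed) means both $c^2 n$ and $6t\ln(2n)$ scale as $\Theta(t\ln t)$; for $n, t$ sufficiently large, any fixed absolute constant is absorbed.

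With both hypotheses verified, \cref{FindDomModel} directly produces a dominating $K_t$-model in $G$, completing the proof. The main obstacle is the verification of $c^2 n \geq 6t\ln(2n)$ in the boundary regime $n \approx c^{-2}t\log_2 t$: here both sides are of order $t\ln t$ and the inequality is tight up to an absolute multiplicative constant. One absorbs this either by interpreting ``sufficiently large'' to include a bounded multiplicative factor on the hypothesis, or by sharpening the factor ``$6$'' appearing in \cref{FindConnDomSet}, for example via Alon's asymptotic dominating-set bound $(1+o(1))n\ln\delta/\delta$ combined with the Duchet--Meyniel trick to retain connectedness.
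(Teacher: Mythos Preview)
Your plan is exactly the paper's: apply \cref{FindDomModel} with the same parameter $c$ and ambient bound $n$, and reduce everything to the inequality $c^2 n \geq 6t\ln(2n)$ given the hypothesis $c^2 n \geq t\log_2 t$.

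The gap is your sentence ``for $n, t$ sufficiently large, any fixed absolute constant is absorbed.'' That is false, and you effectively retract it in your next paragraph. At the boundary $n = c^{-2}t\log_2 t$ one has $c^2 n = t\log_2 t$, while
\[
6t\ln(2n) = 6t\ln\bigl(2c^{-2}t\log_2 t\bigr) \sim 6t\ln t = (6\ln 2)\,t\log_2 t \approx 4.16\,t\log_2 t,
\]
so the desired inequality $c^2 n \geq 6t\ln(2n)$ fails by a fixed factor of about $4$ regardless of how large $t$ is taken. Both sides scale identically in $t$; nothing is absorbed.

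It is worth noting that the paper's own proof makes the same slip. It asserts $\tfrac{t\log_2 t}{\ln(2c^{-2}t\log_2 t)} \geq 6t$, i.e.\ $\log_2 t \geq 6\ln(2c^{-2}t\log_2 t)$, but the preceding line only establishes $\log_2 t \geq 6\ln(2c^{-2}\log_2 t)$ --- missing the dominant $6\ln t$ contribution on the right. So neither argument actually verifies the proposition as literally stated. Your two proposed repairs are the honest ones: either insert an absolute constant into the hypothesis (say $n \geq 5c^{-2}t\log_2 t$), which changes the statement, or sharpen the factor $6$ in \cref{FindConnDomSet} via a tighter dominating-set bound, which requires work beyond what either proof supplies. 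Choose one and carry it out explicitly rather than leaving the constant mismatch as a parenthetical.
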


\begin{proof}
We may assume that $t \geq (2c^{-2}\log_2 t)^6$. Thus 
\begin{equation*}
    \log_2t \geq \ln t \geq 6 \ln( 2c^{-2}\log_2 t).
\end{equation*}
Since $\frac{n}{\ln(2n)}$ is increasing and $n\geq c^{-2} t\log_2 t$, 
\begin{equation*}
    \frac{c^2n}{\ln(2n)} \geq \frac{t\log_2 t}{\ln(2 c^{-2}t\log_2 t)} \geq 6t.
\end{equation*}
Therefore
\begin{equation*}
    2cn \geq cn + 6c^{-1}t\ln(2n).
\end{equation*}
The result follows from \cref{FindDomModel} with minimum degree $2cn$, since we may assume that $cn>\ln(2n)$.
\end{proof}

Here is further evidence for the conjecture.

\begin{prop}
    For sufficiently large $t$ and $d=4t\ln t$, every $d$-regular graph $G$ contains a dominating pseudo-$K_t$-model.
\end{prop}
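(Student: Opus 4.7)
The plan is to apply the Lovász Local Lemma to a uniformly random $t$-colouring of $V(G)$. Assign each vertex $v$ a colour $c_v \in \set{1, \dotsc, t}$ independently and uniformly at random, and let $V_i \coloneqq \set{v : c_v = i}$. For each $v$, let $A_v$ be the event that some colour class $V_i$ with $i \neq c_v$ contains no neighbour of $v$. I claim that if no $A_v$ occurs, then $(V_1, \dotsc, V_t)$ is a dominating pseudo-$K_t$-model. The $V_i$ are pairwise disjoint by construction, and for each $1 \leq i < j \leq t$ and $v \in V_j$, $v$ has a neighbour in $V_i$. The sets are also non-empty: if some $V_i$ were empty, then since $n \geq d + 1 \geq 2$ there would exist a vertex $v$ with $c_v \neq i$, and for that $v$ the class $V_i$ witnesses the occurrence of $A_v$.

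For the probability bound, each of the $d = 4t\ln t$ neighbours of $v$ independently has colour $i$ with probability $1/t$, so $\bP(\text{no neighbour of } v \text{ in } V_i) = (1 - 1/t)^d \leq e^{-d/t} = t^{-4}$. A union bound over the $t - 1$ choices of $i \neq c_v$ yields $\bP(A_v) \leq (t - 1)t^{-4} < t^{-3}$. The event $A_v$ depends only on the colours of the vertices in $\set{v} \cup N(v)$, so $A_v$ and $A_u$ are mutually independent whenever $\dist_G(u, v) \geq 3$. The number of $u \neq v$ within distance $2$ of $v$ is at most $d + d(d - 1) = d^2 = 16 t^2 \ln^2 t$. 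The symmetric Lovász Local Lemma therefore requires $e \cdot \bP(A_v) \cdot (d^2 + 1) \leq 1$, which reduces to $O(\ln^2 t / t) \leq 1$ and holds for all sufficiently large $t$. Hence with positive probability no $A_v$ occurs, giving the required dominating pseudo-$K_t$-model.

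The main delicate point is ensuring that every colour class ends up non-empty; this is precisely why I define $A_v$ in terms of \emph{every} other colour class rather than only those with index smaller than $c_v$. With this symmetric formulation, simultaneously avoiding all $A_v$ delivers both the domination condition required for the pseudo-model and the fact that no $V_i$ is empty, without having to separately control the class sizes (which would otherwise demand a concentration argument that is hard to push through the local lemma when $n$ is not much larger than $t$).
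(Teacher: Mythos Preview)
Your proof is correct and follows essentially the same approach as the paper: randomly colour the vertices, define a bad event for each vertex that its neighbourhood misses some colour class, bound the probability by $t^{-3}$, and apply the symmetric Lov\'asz Local Lemma using the fact that bad events for vertices at distance at least $3$ are independent. The only cosmetic differences are that the paper defines the bad event without excluding the vertex's own colour class (so non-emptiness is immediate) and cites the coupon-collector tail bound rather than your direct union bound $(t-1)(1-1/t)^d < t^{-3}$; neither change affects the argument.
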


\begin{proof} 
Uniformly and randomly put each vertex of $G$ into one of $t$ parts. For each vertex $v$, let $B_v$ be the  event that $v$ has no neighbour in some part. Note that $\mathbb{P}(B_v)$ equals the probability that a coupon collector has not succeeded by time $4t \ln t$, which is at most $t^{-3}$ by standard tail estimate for the coupon collector problem~\citep{CouponCollector}. Note that $B_v$ depends only on the parts where the neighbours of $v$ were placed, and so $B_v$ is independent of $(B_u \colon \dist(u, v) \geq 3)$. There are at most $d^2$ vertices within distance 2 of $v$ (other than $v$). Since $4 t^{-3} d^2 < 1$ for sufficiently large $t$, the Lov\'{a}sz Local Lemma~\citep{EL75} implies that no $B_v$ occurs with positive probability. That is, there exists a partition of $V(G)$ such that every vertex in $G$ has a neighbour in every part. This partition defines a dominating pseudo-$K_t$-model.
\end{proof}

This proof still works if all degrees are close to each other, but doesn't work if the degrees vary wildly.

%%%%%%%%%%%%%%%%%%%%%%%%
\section{Colouring}

This section considers the chromatic number of graphs with no dominating $K_t$-model. \Citet{Mader67} showed that every $K_t$-minor-free graph is $2^{t-2}$-colourable. The proof generalises as follows:

\begin{thm}
\label{Colouring}
For $t\geq 2$, every graph with no dominating $K_t$-model is $2^{t-2}$-colourable. 
\end{thm}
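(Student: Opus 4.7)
The plan is to deduce this as a short corollary of \cref{AverageDegree} via a degeneracy argument. The key observation is that the property of having no dominating $K_t$-model is closed under taking subgraphs: if $H\subseteq G$ and $(T_1,\dots,T_t)$ is a dominating $K_t$-model in $H$, then the same sequence is a dominating $K_t$-model in $G$. Indeed, each $T_i$ remains connected and pairwise disjoint in $G$ (adding edges preserves connectivity), and if every vertex of $T_j$ has a neighbour in $T_i$ within $H$, then certainly this neighbour still exists in $G$. Hence the class of graphs with no dominating $K_t$-model is subgraph-closed.

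By the contrapositive of \cref{AverageDegree}, any graph with no dominating $K_t$-model has average degree strictly less than $2^{t-2}$, and so contains a vertex of degree at most $2^{t-2}-1$. Combining with subgraph-closure, every subgraph of such a $G$ contains a vertex of degree at most $2^{t-2}-1$, which is exactly to say that $G$ is $(2^{t-2}-1)$-degenerate.

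Finally, a standard greedy argument shows that every $k$-degenerate graph is $(k+1)$-colourable: repeatedly remove a vertex of degree at most $k$, and colour in reverse order, giving each vertex a colour not used by its at most $k$ already-coloured neighbours. Applying this with $k=2^{t-2}-1$ yields a proper $2^{t-2}$-colouring of $G$.

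There is no real obstacle here; the whole proof is essentially an unpacking of \cref{AverageDegree} together with the (one-line) verification that the property is subgraph-closed. The only thing to be slightly careful about is noting explicitly why subgraph-closure holds for dominating $K_t$-models, since for many other ``domination-type'' notions this kind of monotonicity can fail, but in our setting the definition only requires the existence of neighbours, which is manifestly preserved under adding edges.
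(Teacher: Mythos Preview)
Your argument is correct. The monotonicity under taking subgraphs does hold for exactly the reason you give, and together with \cref{AverageDegree} it yields $(2^{t-2}-1)$-degeneracy and hence $2^{t-2}$-colourability.

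This is, however, not the paper's primary proof. The paper argues directly by induction on $t$ via a BFS layering: fix a root $r$, let $V_i$ be the $i$-th distance class, and observe that a dominating $K_{t-1}$-model in any $G[V_i]$ would extend to a dominating $K_t$-model in $G$ by prepending the connected set $V_0\cup\dots\cup V_{i-1}$; hence each layer is $2^{t-3}$-colourable by induction, and alternating two palettes across even/odd layers gives $2^{t-2}$ colours. The paper does mention your degeneracy route in a one-line remark as an alternative proof. What the layering argument buys is that it is self-contained (it does not invoke \cref{AverageDegree}) and, more importantly, it can absorb a stronger base case: starting the induction at $t=4$ with \cref{NoDomK4Colour} yields the improved bound $3\cdot 2^{t-4}$ of \cref{ColouringImproved}, which the degeneracy approach does not immediately give. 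Conversely, your approach delivers the slightly stronger conclusion of $(2^{t-2}-1)$-degeneracy rather than just colourability.
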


\begin{proof}
We proceed by induction on $t$. 
Every graph $G$ with no dominating $K_2$-model has no edges, implying $G$ is $1$-colourable. Now assume that $t\geq 3$, and the result holds for $t-1$. 
Let $G$ be a graph with no dominating $K_t$-model. We may assume that $G$ is connected. Let $r$ be any vertex in $G$. For $i\geq 0$, let $V_i \coloneqq \set{v\in V(G) \colon \dist_G(v,r)=i}$. So $V_0=\set{r}$, and for $i\geq 1$, every vertex in $V_i$ has a neighbour in $V_{i-1}$. If for some $i\geq 1$, there is a dominating $K_{t-1}$-model $(T_1,\dots,T_{t-1})$ in $G[V_i]$, then $(G[V_0\cup\dots\cup V_{i-1}],T_1,\dots,T_{t-1})$ is a dominating $K_t$-model in $G$. Thus for every $i\geq 0$, there is no dominating $K_{t-1}$-model in $G[V_i]$. By induction, each $G[V_i]$ is $2^{t-3}$-colourable. Use the same set of $2^{t-3}$ colours for $\bigcup(V_i \colon i\text{ even})$ and use a disjoint  set of $2^{t-3}$ colours for $\bigcup(V_i \colon i\text{ odd})$. Since there is no edge between $V_i$ and $V_j$ with $j\geq i+2$, we obtain a $2^{t-2}$-colouring of $G$.
\end{proof}

Note that \Cref{AverageDegree} implies that every graph with no dominating $K_t$-model is $(2^{t-2}-1)$-degenerate and thus $2^{t-2}$-colourable, which provides an alternative proof of \cref{Colouring}.

The proof of \cref{Colouring} in conjunction with \cref{NoDomK4Colour} in the base case shows:

\begin{thm}
\label{ColouringImproved}
For $t\geq 4$, every graph with no dominating $K_t$-model is $3\cdot 2^{t-4}$-colourable. 
\end{thm}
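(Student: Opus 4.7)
The plan is to adapt the proof of \cref{Colouring} verbatim, simply changing the base case of the induction on $t$ from $t = 2$ to $t = 4$ and invoking \cref{NoDomK4Colour} there. The base case matches $3 \cdot 2^{4-4} = 3$, and the inductive step will double the colour budget each time $t$ increases by $1$, which gives $3 \cdot 2^{t-4}$ as desired.

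For the inductive step with $t \geq 5$, I would take a graph $G$ with no dominating $K_t$-model, assume without loss of generality that $G$ is connected, pick any root $r$, and define BFS layers $V_i \coloneqq \set{v \in V(G) \colon \dist_G(v, r) = i}$. The key observation, exactly as in \cref{Colouring}, is that no $G[V_i]$ can contain a dominating $K_{t-1}$-model: if $(T_1, \dots, T_{t-1})$ were such a model inside $G[V_i]$ for some $i \geq 1$, then since every vertex of $V_i$ has a neighbour in $V_{i-1}$ and $G[V_0 \cup \dots \cup V_{i-1}]$ is connected, the sequence $(G[V_0 \cup \dots \cup V_{i-1}], T_1, \dots, T_{t-1})$ would be a dominating $K_t$-model in $G$, contradicting the hypothesis. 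For $i = 0$, $G[V_0]$ is a single vertex and the statement is trivial. Since $t - 1 \geq 4$, the inductive hypothesis applies and each $G[V_i]$ is $3 \cdot 2^{t-5}$-colourable.

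To assemble a global colouring, I would use two disjoint palettes of $3 \cdot 2^{t-5}$ colours, one for the even-indexed layers and one for the odd-indexed layers. This is proper because no edge of $G$ joins $V_i$ and $V_j$ when $\abs{i - j} \geq 2$. The total number of colours used is $2 \cdot 3 \cdot 2^{t-5} = 3 \cdot 2^{t-4}$, as required. Since the argument is a direct mimicry of the proof of \cref{Colouring} with only the base case swapped, I anticipate no real obstacle; the only thing to check is that $t - 1 \geq 4$ in the inductive step, so that the improved bound from \cref{NoDomK4Colour} together with the induction cover all $t \geq 4$.
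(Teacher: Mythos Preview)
Your proposal is correct and matches the paper's approach exactly: the paper simply states that the proof of \cref{Colouring} together with \cref{NoDomK4Colour} as the base case yields \cref{ColouringImproved}, which is precisely the argument you have written out in detail.
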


Now consider the chromatic number of random graphs. For constant $p \in (0, 1)$, the asymptotic value of $\chi(G(n, p))$ was determined independently by \citet{Bollobas88} and \citet{MK90}. They proved that, with high probability,
\begin{equation}
\label{ChromaticNumberGnp}
    \chi(G(n, p)) = \bigl(\tfrac{1}{2} + o(1)\bigr) \tfrac{n}{\log_b n},
\end{equation}
where $b \coloneqq \frac{1}{1 - p}$. See \citet{Heckel18} for the most precise estimates currently known.

\citet*{BCE80} showed that for fixed $p \in (0, 1)$, with high probability,
\begin{equation*}
    \had(G(n, p)) = (1 + o(1)) \tfrac{n}{\sqrt{\log_b n}}.
\end{equation*}
It follows that Hadwiger's conjecture comfortably holds for almost every graph. 

\Cref{thm:domhadGnp,ChromaticNumberGnp} imply that:

\begin{cor}
\label{DomHadConAlmost}
Almost every graph $G$ satisfies $\chi(G) \leq \domhad(G)$.
\end{cor}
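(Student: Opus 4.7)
The plan is to interpret ``almost every graph'' as the uniform random graph $G(n, \tfrac{1}{2})$, which puts equal mass on each labelled $n$-vertex graph, and then to combine \cref{thm:domhadGnp} with \eqref{ChromaticNumberGnp} at the parameter $p = \tfrac{1}{2}$. Setting $p = \tfrac{1}{2}$ gives $b = \tfrac{1}{1 - p} = 2$, so both asymptotic formulas are expressed in terms of the common quantity $\tfrac{n}{\log_2 n}$, which is what makes the comparison clean.

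Concretely, I would invoke \cref{thm:domhadGnp} with $p = \tfrac{1}{2}$ to obtain
\begin{equation*}
    \domhad(G(n, \tfrac{1}{2})) = (1 + o(1))\, \tfrac{n}{\log_2 n}
\end{equation*}
asymptotically almost surely, and invoke \eqref{ChromaticNumberGnp} with the same $p$ to obtain
\begin{equation*}
    \chi(G(n, \tfrac{1}{2})) = (\tfrac{1}{2} + o(1))\, \tfrac{n}{\log_2 n}
\end{equation*}
asymptotically almost surely. A union bound ensures that both estimates hold simultaneously with probability $1 - o(1)$. Since $\tfrac{1}{2} + o(1) < 1 + o(1)$ for $n$ sufficiently large, this yields $\chi(G(n, \tfrac{1}{2})) \leq \domhad(G(n, \tfrac{1}{2}))$ asymptotically almost surely, which is exactly what the corollary asserts.

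There is no real obstacle: the corollary is essentially a one-line deduction once the leading constants in the two asymptotics are placed side by side. What is notable about the argument is the size of the gap it produces. The ratio $\chi(G(n, \tfrac{1}{2})) / \domhad(G(n, \tfrac{1}{2}))$ tends to $\tfrac{1}{2}$, so the inequality holds with a comfortable constant-factor margin, in contrast to the classical Bollob\'as--Catlin--Erd\H{o}s theorem where the ratio $\chi/\had$ on $G(n, \tfrac{1}{2})$ tends to $0$ like $1/(2\sqrt{\log_2 n})$. Thus, even though the dominating-model version is a strict strengthening of Hadwiger's conjecture, it still holds on almost every graph with non-vanishing slack.
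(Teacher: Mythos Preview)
Your proposal is correct and matches the paper's approach exactly: the paper simply states that the corollary follows from \cref{thm:domhadGnp} and \eqref{ChromaticNumberGnp}, and your write-up is a faithful unpacking of that implication at $p=\tfrac12$. There is nothing to add or change.
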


It is curious that the chromatic number and dominating Hadwiger number asymptotically differ only by a factor of 2, compared to a factor of $\sqrt{\log_2 n}$ in the case of the Hadwiger number. 

We finish this section with the following amusing observation. Let $G$ be a graph with $\chi(G) = t$. Choose a proper colouring of $G$ with colours $1, \dotsc, t$ minimising $\sum_{v\in V(G)} \col(v)$. Then for $1 \leq i < j \leq t$, every vertex of colour $j$ is adjacent to a vertex of colour $i$. In particular, the colour classes form a dominating pseudo-$K_t$-model.
This says that the `dominating pseudo-Hadwiger conjecture' is true!

%%%%%%%%%%%%%%%%%%%%%%%%
\section{Open Problems}

We finish with a number of open problems. 

\begin{itemize}
   
\item Several authors~\citep{PST03,CS12,NS22a,Blasiak07} have noted that graphs $G$ with independence number $\alpha(G)=2$ are a key unsolved case of Hadwiger's Conjecture. \Citet*{PST03} showed that Hadwiger's Conjecture holds for this class if and only if every $n$-vertex graph $G$ with $\alpha(G)=2$ has a $K_t$-minor with $t\geq \ceil{\frac{n}{2}}$. The following natural question arises: Does every $n$-vertex graph with $\alpha(G)=2$ have a dominating $K_t$-model with $t\geq \ceil{\frac{n}{2}}$? Note that if $\alpha(G)=2$ and $G$ has no dominating $K_t$-model, then $2\geq \alpha(G)\geq \frac{n+t-1}{2t-2}$ by \cref{BigIndSet}, implying $n \leq 3t - 3$. That is, if $\alpha(G)=2$ then $G$ has a dominating $K_t$-model, where $t \geq \ceil{\frac{n}{3}}$.

\item The following potential strengthening of the 4-Colour Theorem is open: Is every graph with no dominating $K_5$-model 4-colourable?

\item Does every graph with no dominating $K_t$-model have fractional chromatic number at most $2t-2$? \Citet{RS98} proved this for $K_t$-minor-free graphs.

\item Is there a rough structure theorem for graphs with no dominating $K_t$-model (in the spirit of Robertson and Seymour's rough structure theorem for $K_t$-minor-free graphs~\citep{RS-XVI} and Grohe and Marx's rough structure theorem for $K_t$-topological-minor-free graphs~\citep{GM15})? 

\end{itemize}

{\fontsize{10pt}{11pt}\selectfont

\def\soft#1{\leavevmode\setbox0=\hbox{h}\dimen7=\ht0\advance \dimen7 by-1ex\relax\if t#1\relax\rlap{\raise.6\dimen7 \hbox{\kern.3ex\char'47}}#1\relax\else\if T#1\relax \rlap{\raise.5\dimen7\hbox{\kern1.3ex\char'47}}#1\relax \else\if d#1\relax\rlap{\raise.5\dimen7\hbox{\kern.9ex \char'47}}#1\relax\else\if D#1\relax\rlap{\raise.5\dimen7 \hbox{\kern1.4ex\char'47}}#1\relax\else\if l#1\relax \rlap{\raise.5\dimen7\hbox{\kern.4ex\char'47}}#1\relax \else\if L#1\relax\rlap{\raise.5\dimen7\hbox{\kern.7ex \char'47}}#1\relax\else\message{accent \string\soft \space #1 not defined!}#1\relax\fi\fi\fi\fi\fi\fi}

}

\end{document}